\documentclass[11pt]{article}
\usepackage[utf8]{inputenc}
\usepackage[usenames,dvipsnames,svgnames,table]{xcolor}	
\usepackage[]{amsmath}
\usepackage{cases}
\usepackage{mathrsfs}
\usepackage{empheq}
\usepackage[colorlinks=true]{hyperref}
\hypersetup{colorlinks,
citecolor=brown, linkcolor=blue, linktocpage
}
\usepackage{amssymb}
\usepackage{bbm}
\usepackage{enumerate}   
\usepackage{geometry}
\usepackage{amsfonts}
\usepackage{color}
\usepackage[]{amsthm} 
 \usepackage[shortlabels]{enumitem}
 \usepackage{cleveref}

\newtheorem{theorem}{Theorem}[section]
\newtheorem{lemma}[theorem]{Lemma}

\newtheorem{proposition}[theorem]{Proposition}
\newtheorem{definition}[theorem]{Definition}

\newtheorem{remark}[theorem]{Remark}
\newtheorem{assumption}[theorem]{Assumption}

\bibliographystyle{acm}
\geometry{hmargin=2.5cm,vmargin=2cm}
\def\E{\mathbb{E}}
\def\R{\mathbb{R}}
\def\Ps{\mathbb{P}}
\def\Q{\mathbb{Q}}
\def\N{\mathbb{N}}

\def\W{\mathbb{W}}
\def\1{\mathbbm{1}}

\def\D1{\frac{\partial}{\partial x}}

\newcommand{\Ff}{\mathcal F}

\newcommand{\Pp}{\mathcal P}

\numberwithin{equation}{section}

\usepackage{xspace}

\title{Propagation of chaos for stochastic particle systems with singular mean-field interaction of $L^q-L^p$ type}       
\author{Milica Toma\v sevi\'c\footnote{CMAP, CNRS, École polytechnique, Institut Polytechnique de
Paris, 91120 Palaiseau, France. milica.tomasevic@polytechnique.edu}}
\date{}                       % sans date : celle du jour de compilation

%\pagestyle{headings}          % Pour mettre des entetes avec les titres
                              % des sections en haut de page
\sloppy                       % Ne pas faire deborder les lignes dans la marge

\begin{document}

\maketitle                    % Faire un titre utilisant les donnees
                              % passees : \title, \author et \date

\begin{abstract}
In this work, we prove the well-posedness and propagation of chaos for a stochastic particle system in mean-field interaction under the assumption that the interacting kernel belongs to a suitable $L_t^q-L_x^p$ space. Contrary to the large deviation principle approach recently proposed in~\cite{Mauri}, the main ingredient of the proof here are the \textit{Partial Girsanov transformations} introduced in~\cite{JTT} and developed in a general setting in this work.
\\
\textbf{\textit{Keywords:}} Stochastic particle systems ; Singular interaction ; Propagation of chaos
\\
\textbf{\textit{AMS 2020 classification}:} 60K35.
\end{abstract}

%\tableofcontents              % Table des matieres

%\newpage

\section{Introduction }
The goal of this paper is to study the well-posedness and the convergence as $N\to \infty$  of the following particle system taking values in $\R^d$
\begin{equation}
\label{PS0}
\begin{cases}
&dX_t^{i,N}= \frac{1}{N} \sum_{j=1}^N 
b(t,X_t^{i,N},X_t^{j,N})~dt +  dW_t^i, \quad t>0, i\leq N\\
& X_0^{i,N} \text{ i.i.d. and  independent of } W:=(W^i, 1\leq
i \leq N),
\end{cases}
\end{equation}
towards the non-linear stochastic differential equation 
\begin{equation}
\label{SDE}
\begin{cases}
& dX_t= \int b(t,X_t,y) \rho_t(y) \ dy \ dt
+   dW_t, \quad t> 0, \\
& \rho_t(y)dy:=  \mathcal{L}( X_{t}),\quad X_0 \sim \rho_0(x)dx,
\end{cases}
\end{equation}
under the following assumption 
\begin{assumption}
\label{H1}
For  $x,y\in \R^d$ and $t>0$, one has $|b(t,x,y)|\leq h_t(x-y)$ for some $ h \in L^q_{loc}(\R_+;L^p(\R^d)) $, where $p,q\in(2,\infty)$ satisfy $\frac{d}{p}+\frac{2}{q}<1$.
\end{assumption}
%\begin{enumerate}[{\bf (H$^b$)}]
%\item $|b(t,x,y)|\leq h_t(x-y)$ for some $ h \in L^q_{loc}(\R_+;L^p(\R^d)) $, where $p,q\in(2,\infty)$ satisfy $\frac{d}{p}+\frac{2}{q}<1$.
%\end{enumerate}
%%$$(H_b) ~~~~~~~~~~|b(t,x,y)|\leq h_t(x-y) \text{ where } h \in L^q_{loc}(\R_+;L^p(\R^d)) \text{ and }  \frac{d}{p}+\frac{2}{q}<1$$
In \eqref{PS0} and \eqref{SDE} $W^i$'s  and $W$
 are 
 independent standard $d$-dimensional Brownian motions defined on a probability space $(\Omega, \mathcal{F}, \mathbb{P})$. 
% When we say propagation of chaos, we mean the convergence in law of the (random) empirical measure on the space of trajectories $\mu^N:= \sum_{i=1}^N \delta_{(X^i_t)_{t\geq 0}}$ towards the constant random variable equal to the law of the process $X$ as $N \to \infty$. The latter is equivalent (see \cite[Def. 2.1 and Prop. 2.2]{Sznitman}) to the convergence of the law of any $k$-tuple of particles to the law of $X$ tensorised $k$ times for any $k\geq 2$.
 
 Assumption~\ref{H1} is the so-called $L_t^q-L_x^p$ assumption on the drift and it goes back to Krylov-R\" ockner~\cite{KryRoc-05} in the framework of linear and non-interacting SDEs. When it comes to non-linear stochastic processes \eqref{SDE}, it has been studied in  R\"ockner and Zhang~\cite{RocknerZhang}. There, the authors conjecture 
 the propagation of chaos of \eqref{PS0} towards \eqref{SDE} under an $L_t^q-L_x^p$ type assumption on the interaction kernel. This has been  very recently  established in the literature as a consequence of the large deviation principle for such particle system proved in Hoeksama \textit{et al.}~\cite{Mauri}.
 
  In this work, we propose a completely different approach 
% to prove the propagation of chaos 
relying on the new techniques introduced by Jabir \textit{et al.}~\cite{JTT} to prove the propagation of chaos for  the particle system with both non-Markovian and singular interaction related to the parabolic-parabolic one-dimensional  Keller-Segel model. The main difference w.r.t.~\cite{Mauri} is that we get the propagation of chaos directly working on the martingale problem on the level of the particle system. In addition, we extend a technique developed in a very specific case to the general setting of~\eqref{PS0}.

The plan of the paper is the following. In Section~\ref{sec:main} we state the main results of this work. Then, weak well-posedness is established for \eqref{PS0} inspired by the approach in~\cite{KryRoc-05} in this setting (Section \ref{sec:existence}). Namely, by means of Girsanov transform, the interaction is added to a driftless system of $N$ independent Brownian motions. The latter is justified by controlling the exponential martingale arising from the transform through a Novikov condition.\\
Then in Section~\ref{sec:propchaos}, to prove the propagation of chaos property, we first obtain the tightness w.r.t. $N\geq 1$ of the laws $\Pi^N$ of $\mu^N$. Then, we show that all the limit points of $\Pi^N$ are concentrated around the law $\Q$ of the weak solution to \eqref{SDE}. 
%The latter involves proving that for a suitable functional $G: \mathcal{P}(C([0,T];\R^d))\to \R$ one has that 
%\begin{enumerate}
%\item $\lim_{N_l\to \infty}\E((G(\mu^{N_l}))^2)=0$
%\item $\lim_{N_l\to \infty}\E((G(\mu^{N_l}))^2)=(G(\Q))^2$
%\end{enumerate}
%for a suitable subsequence $N_l$. The form of $G$ comes from the non-linear martingale problem related to \eqref{SDE}. 
Informally speaking, we show the convergence of the martingale problem on the level of the particle system towards the non-linear martingale problem at the limit. This is a well established tactic (see e.g.~\cite{Sznitman}) and it is successfully applied in case of discontinuous interaction in~\cite{BossyTalay}. In this framework due to the singular nature of the drift, this is the most delicate part of the proof for which we need to go beyond the approach of~\cite{KryRoc-05}.

Namely, the Girsanov transform used to define the particle system should be the main tool to prove the above tightness and convergence properties. However, the control on the exponential martingale related to this \textit{full} Girsanov transform explodes with $N\to \infty$ and as such is of no use. As noticed in~\cite[p. 180]{Sznitman}, what one needs to control when passing to the limit are functionals of finite number of particles (usually at most 4) as the particle system is exchangeable. Hence, removing the drift of all the particles when exhibiting the above controls is unnecessarily expensive. To circumvent this inconvenience, following~\cite{JTT} we introduce \textit{Partial Girsanov Transforms} that remove drifts of a finite number $k<N$ of particles and their dependence of the rest $N-k$ particles. It turns out that such transforms are especially suitable for our multi-dimensional setting and lead to non-divergent estimates on the corresponding exponential martingales. 

To finish this part, let us discuss here an alternative hypothesis to Assumption~\ref{H1} on which our technique also applies. In fact, we can ask less than  $L^p$ condition on the whole space for the dominating function $h$. For instance, in~\cite{RocknerZhang} the authors work with localised $L^q-L^p$ spaces and a typical example of a singular interaction treated is $b_t(x,y)= \frac{a_t(x,y) }{|x-y|^\alpha}$ for $|a_t(x,y)|\leq \kappa |x-y|$ where $\alpha \in [1,2)$ and $\kappa >0$. In the framework of this paper, such kernels are also admissible even if they do not satisfy Assumption~\ref{H1}. In fact, it is enough to suppose the following alternative.
\begin{assumption}
\label{H2}
For  $x,y\in \R^d$ and $t>0$, we suppose that $|b(t,x,y)|\leq h_t(x-y)$ for some $ h \in L^q_{loc}(\R_+;L_{loc}^p(\R^d)) $, where $p,q\in(2,\infty)$ satisfy $\frac{d}{p}+\frac{2}{q}<1$ and the function $H(T):=\int_0^T \sup_{|x|>1}|h_t(x)|^2 dt $ is an increasing function from $\R_+$ to $\R_+$.
\end{assumption}
%\begin{enumerate}[{\bf ($\tilde{\text{H}}^b$)}]
%\item $|b(t,x,y)|\leq h_t(x-y)$ for some $ h \in L^q_{loc}(\R_+;L_{loc}^p(\R^d)) $, where $p,q\in(2,\infty)$ satisfy $\frac{d}{p}+\frac{2}{q}<1$ and $H(T):=\int_0^T \sup_{|x|>1}|h_t(x)|^2 dt $ is an increasing function from $\R_+$ to $\R_+$.
%\end{enumerate}
In this case, the key lemma (Lemma~\ref{MainLemmas1}) in our computation is easily adapted and from there on it is straightforward to adapt the rest of the proofs (see the end of Section~\ref{sec:existence}).

\subsection{Main Results}
\label{sec:main}
Let $t>0$. As the interaction kernel $b(t,\cdot,\cdot)$ has only integrability properties w.r.t. the second or third variable, it may be unbounded or not well defined in certain points. Let us denote by $\mathcal{N}_b(t)$ the following set:
\begin{multline*}
    \mathcal{N}_b(t)=\Big\{(x,y)\in \R^d\times \R^d:\lim_{(x',y')\to (x,y)} |b(t,x',y')|=\infty \\
    \text{  or } \lim_{(x',y')\to (x,y)} |b(t,x',y')| \text{ does not exist} \Big\}.
\end{multline*}
%$$\mathcal{N}_b(t)=\{(x,y)\in \R^d\times \R^d:\lim_{(x',y')\to (x,y)} |b(t,x',y')|=\infty \ \ \text{  or } \lim_{(x',y')\to (x,y)} |b(t,x',y')| \text{ does not exist} \}.$$
However, as $|b(t,x,y)|\leq h_t(x-y)$ and   $h_t\in L^p(\R^d)$, the set  $\mathcal{N}_b(t)$ is of Lebesgue's measure zero in $\R^d\times \R^d$.
Then, in order to ensure that the drift term makes sense, for $N\geq 1$, the particle system reads 
\begin{equation}
\label{PS}
\begin{cases}
&dX_t^{i,N}= \frac{1}{N} \sum_{j=1,j\neq i}^N 
b(t,X_t^{i,N},X_t^{j,N}) \mathbb{1}_{{\{(X_t^{i,N},X_t^{j,N})\notin \mathcal{N}_b(t)\}}}  ~dt + \sqrt{2} dW_t^i, \\
& X_0^{i,N} \text{ i.i.d. and  independent of } W:=(W^i, 1\leq
i \leq N).
\end{cases}
\end{equation}
 Notice here that we assume that a particle does not interact with itself and that we set an interaction to zero every time the set $\mathcal{N}_b(t)$ is visited. As we will construct the particle system by means of a Girsanov transformation, the law of $(X^N)$ will be absolutely continuous w. r. t. Wiener measure. 
 Lebesgue measure of the set $\{t>0, (X_t^{i,N},X_t^{j,N})\in \mathcal{N}_b(t)\}$ will thus be zero. Hence, the dynamics \eqref{PS0} and \eqref{PS} are essentially the same. 

In practical examples, $h_t$ explodes (or it is not well defined) only in one point (zero) and thus, $b(t,\cdot,\cdot)$ may explode (or not be well defined) on the line $x=y$. In order to keep the result as general as possible, we will rather work with the sets $(\mathcal{N}_b(t))_{t\geq 0}$. However, the reader should have in mind that this is just a technicality and that, in practice $ \mathbb{1}_{{\{(X_t^{i,N},X_t^{j,N})\notin \mathcal{N}_b(t)\}}} $ becomes $\mathbb{1}_{{\{X_t^{i,N}\neq
X_t^{j,N}\}}} $ (as in~\cite{JTT}).

Our first main result is the weak well-posedness of \eqref{PS}.  
\begin{theorem}
\label{PSexistence}
Let Assumption~\ref{H1} hold.  Given $0<T<\infty$ and $N \in \mathbb{N}$,
% for any initial distribution of $(X_0^{i,N}, i\leq N)$,
 there exists a weak solution $(\Omega,\Ff,
(\Ff_t;\,0\leq t\leq T),\Q^N, W, X^N)$ to the $N$-interacting particle
system \eqref{PS} that satisfies, for any $1\leq i \leq N$,
%\forall 1\leq i\leq N, ~~~
\begin{equation}\label{DriftL2}
 \Q^N\left(\int_0^T\left(\frac{1}{N}\sum_{j=1,j\neq i}^N b(t,X_t^{i,N},X_t^{j,N}) \mathbb{1}_{{\{(X_t^{i,N},X_t^{j,N})\notin \mathcal{N}_b(t)\}}}\right)^2\,dt<\infty\right)=1.
%Old 18 12
%\forall 1\leq i\leq N, ~~~
%\Q^N\left(\int_0^T\left|\frac{1}{N}\sum_{j=1,j\neq i}^N \int_0^t
%K_{t-s}(X_t^{i,N}-X_s^{j,N})ds \mathbb{1}_{\{X_t^{i,N}\neq
%X_t^{j,N}\}}\right|\,dt<\infty\right)=1.
\end{equation}
\end{theorem}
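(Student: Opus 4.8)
The plan is to construct the weak solution to \eqref{PS} via a Girsanov transformation, exactly in the spirit of Krylov--Röckner~\cite{KryRoc-05}, adapted to the $N$-particle setting. We start from the ``free'' system on the canonical space: let $(\tilde X^{i,N})_{1\le i\le N}$ be $N$ independent $\R^d$-valued processes solving $d\tilde X^{i,N}_t=\sqrt 2\,d\tilde W^i_t$ with $\tilde X^{i,N}_0$ i.i.d.\ according to the prescribed initial law and independent of the $d$-dimensional Brownian motions $\tilde W^i$. Under the reference measure $\Ps$ this is nothing but a collection of independent Brownian motions started from the right distribution, so $(\tilde X^{1,N},\dots,\tilde X^{N,N})$ has law equal to the product of Wiener-type measures. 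Then I would define the exponential local martingale
\begin{equation*}
\mathcal E_t=\exp\Bigl(\frac{1}{\sqrt2}\sum_{i=1}^N\int_0^t \beta^i(s,\tilde X^{\cdot,N}_s)\cdot d\tilde W^i_s-\frac{1}{4}\sum_{i=1}^N\int_0^t |\beta^i(s,\tilde X^{\cdot,N}_s)|^2\,ds\Bigr),
\end{equation*}
where $\beta^i(s,x^{\cdot})=\frac1N\sum_{j\ne i}b(s,x^i,x^j)\mathbbm 1_{\{(x^i,x^j)\notin\mathcal N_b(s)\}}$. If $\mathcal E_T$ is a genuine martingale with $\E_\Ps[\mathcal E_T]=1$, then setting $d\Q^N=\mathcal E_T\,d\Ps$ on $\Ff_T$ and invoking Girsanov's theorem, the processes $W^i_t:=\tilde W^i_t-\frac1{\sqrt2}\int_0^t\beta^i(s,\tilde X^{\cdot,N}_s)\,ds$ are independent $\Q^N$-Brownian motions and $\tilde X^{i,N}$ solves \eqref{PS}. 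Uniqueness of the stochastic integral representation is not needed here since we only claim existence of a weak solution.

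The crux is therefore to verify $\E_\Ps[\mathcal E_T]=1$, and the natural sufficient condition is a Novikov-type bound $\E_\Ps\bigl[\exp\bigl(\tfrac14\sum_{i=1}^N\int_0^T|\beta^i(s,\tilde X^{\cdot,N}_s)|^2\,ds\bigr)\bigr]<\infty$. By $|\beta^i|\le \frac1N\sum_{j\ne i}h_s(\tilde X^{i,N}_s-\tilde X^{j,N}_s)$ and convexity (Jensen in the normalized sum over $j$), one reduces $|\beta^i|^2$ to an average of $h_s(\tilde X^{i,N}_s-\tilde X^{j,N}_s)^2$, and then $\sum_i\int_0^T|\beta^i|^2\,ds$ is controlled by $\frac1N\sum_{i\ne j}\int_0^T h_s(\tilde X^{i,N}_s-\tilde X^{j,N}_s)^2\,ds$. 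The standard trick (a generalized Hölder/Khasminskii argument, as in~\cite{KryRoc-05}) is to expand the exponential in a series and estimate iterated integrals $\E_\Ps\bigl[\prod_k h_{s_k}(\cdot)^2\bigr]$ using the Gaussian transition density of Brownian motion together with the $L^q_t L^p_x$ bound on $h$ and the condition $\frac dp+\frac2q<1$; each pairwise term $\tilde X^{i,N}_s-\tilde X^{j,N}_s$ is itself a Brownian motion (times $\sqrt2$) plus an independent Gaussian, so the relevant heat-kernel smoothing estimates apply. Since the number of interacting pairs is finite for fixed $N$, one obtains $\E_\Ps[\exp(C_N\int_0^T h^2)]<\infty$ for every finite $N$ — the bound is allowed to blow up in $N$ here, which is acceptable at this stage (the whole point of the later \emph{partial} Girsanov transforms is precisely to avoid such $N$-dependence when passing to the limit). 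This integrability estimate is presumably packaged in the ``key lemma'' referred to in the excerpt (Lemma~\ref{MainLemmas1}), and I would simply quote it.

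Finally, the $\Q^N$-integrability statement \eqref{DriftL2} follows from the same computation: since $d\Q^N=\mathcal E_T d\Ps$ with $\mathcal E_T\in L^{1+\varepsilon}(\Ps)$ for some $\varepsilon>0$ (which itself comes from the same Khasminskii bound applied with a slightly larger constant, using that $\frac dp+\frac2q<1$ leaves room), we have by Hölder
\begin{equation*}
\E_{\Q^N}\Bigl[\int_0^T|\beta^i(t,\tilde X^{\cdot,N}_t)|^2\,dt\Bigr]=\E_\Ps\Bigl[\mathcal E_T\int_0^T|\beta^i(t,\tilde X^{\cdot,N}_t)|^2\,dt\Bigr]\le \|\mathcal E_T\|_{L^{1+\varepsilon}(\Ps)}\Bigl\|\int_0^T|\beta^i|^2\,dt\Bigr\|_{L^{(1+\varepsilon)/\varepsilon}(\Ps)},
\end{equation*}
and the last factor is finite by the exponential moment bound (an exponential moment controls all $L^r$ moments). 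Hence the drift of each particle is in $L^2([0,T])$, $\Q^N$-almost surely, which is \eqref{DriftL2} and also what makes the Girsanov change of measure legitimate a posteriori. The main obstacle is the Khasminskii-type estimate on $\E_\Ps[\exp(C\int_0^T h^2)]$ under the subcritical scaling $\frac dp+\frac2q<1$; everything else is bookkeeping. Under Assumption~\ref{H2} instead of Assumption~\ref{H1}, the same scheme works after replacing $h_t(x-y)$ by its restriction: on $\{|x-y|\le 1\}$ one still has the local $L^p$ bound, while on $\{|x-y|>1\}$ one uses the $L^2_t$ bound of $\sup_{|x|>1}|h_t(x)|$ encoded in $H(T)$, so the exponential moment splits into a bounded deterministic factor and the same local Khasminskii estimate.
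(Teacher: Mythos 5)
Your proposal follows essentially the same route as the paper: a Girsanov transform from the driftless system of independent Brownian motions, with the Novikov condition verified by Jensen's reduction to pairwise terms and a Khasminskii-type series expansion using the Gaussian transition density, H\"older in space and time, and the subcritical condition $\frac{d}{p}+\frac{2}{q}<1$ (the paper's Lemma~\ref{MainLemmas1}, Lemma~\ref{Mainlemmas3} and Proposition~\ref{est1}), with $N$-dependent constants being acceptable at this stage, and the same splitting for Assumption~\ref{H2}. The only cosmetic difference is how \eqref{DriftL2} is concluded: you use an $L^{1+\varepsilon}$ moment of the density and H\"older, whereas the paper simply invokes absolute continuity of $\Q^N$ with respect to the Wiener measure together with the exponential moment bound; both are valid.
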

In view of Karatzas and Shreve~\cite[Chapter 5, Proposition 3.10]{KaratzasShreve}, one has that weak uniqueness holds in the class of weak solutions satisfying \eqref{DriftL2}.
%\begin{corollary}
%Weak uniqueness holds in the class of weak solutions satisfying \eqref{DriftL2}.
%\end{corollary}

Before we state the propagation of chaos result, we formulate the  martingale problem associated to \eqref{SDE}. It is classical that a suitable notion of weak solution to \eqref{SDE}
is equivalent to the notion of solution to (MP) (see e.g.~\cite{KaratzasShreve}).

\begin{definition}
$\Q \in \mathcal{P}(C[0,T];\R^d)$ is a solution to (MP) if:
\begin{enumerate}[(i)]
\item $\Q_0 = \mu_0$;
\item For any $t\in (0,T]$ and any $r> 1$, the one dimensional time marginal~$\Q_t$
of $\Q$ has ~a density~ $\rho_t$ w.r.t. Lebesgue measure on $\R^d$ which belongs to $L^r(\R^d)$ ~and~satisfies~
%Old: For any $t\in (0,T]$, the one dimensional time marginals  $\Q_t$
%of $\Q$ have densities $\rho_t$ w.r.t. Lebesgue measure on $\R$ which
%belong to $L^2(\R)$ for any $0<t\leq T$ and satisfy
$$ \exists C_T,~~\forall\, 0<t\leq T, ~~~ \Vert \rho_t\Vert_{L^r(\R^d)}
\leq \frac{C_{T}}{t^{\frac{d}{2}(1-\frac{1}{r})}}; $$
% ~~~ \text{ and } ~~~  \|\rho_t\|_{L^\infty(\R)}\leq \frac{C_T}{\sqrt{t}}
%Old 15 12:$$\|p_t\|_2 \leq \frac{C_{T}}{t^{1/4}},$$
\item Denoting by $(x(t);\,t\leq T)$ the canonical process of
$C([0,T];\R^d)$, we have: For any $f \in C_b^2(\R^d)$, the process defined
by
$$M_t:=f(x(t))-f(x(0))-\int_0^t\left( \nabla f(x(s))\cdot \Big( \int b(s,x(s),y) \rho_s(y)dy \Big)+ \triangle f(x(s))\right)\,ds$$
%Old: $$M_t:=f(x(t))-f(x(0))-\int_0^t \left(\Big(\int_0^s \int K_{s-
%r}(x(s)-y) \rho_r(y)dydr\Big)\frac{\partial f}{\partial
%x}(x(s))+\frac{1}{2} \frac{\partial^2f}{\partial x^2}(x(s))\right)\,ds$$
is a $\Q$-martingale w.r.t. the canonical filtration.
\end{enumerate}
\end{definition}
\begin{remark}
Under Assumption~\ref{H1} and supposing that the measure $\mu_0$  has a finite $\beta$-order moment for some $\beta>2$, the martingale problem (MP) admits a unique solution according to~\cite[Thm. 1.1]{RocknerZhang}. There the authors prove the Gaussian estimates punctually  on the marginal densities. In the martingale formulation it is enough to impose such estimates in $L^r$-norms as, along with Assumption~\ref{H1}, this will ensure that all the terms in the definition of the process $(M)$ are well defined.
\end{remark}

We are ready to state our second main result, the propagation of chaos of \eqref{PS}. In practice one deals with interaction kernels in the form of convolutions, that are well defined and continuos almost everywhere (like $\pm \frac{x}{|x|^r}$). Hence,  it is reasonable to assume that $b(t,\cdot,\cdot)$ is continuous outside of $\mathcal{N}_b(t)$.
\begin{theorem}
\label{th:PSconvergence}
In addition to Assumption~\ref{H1}, assume that for any $t>0$, $b(t,\cdot,\cdot)$ is continuous outside of the set $\mathcal{N}_b(t)$.
%of the line $\{(x,y)\in \R^d\times \R^d: x=y\}$. 
Assume that the $X_0^{i,N}$'s are i.i.d. and~ that the initial distribution of $X_0^{1,N}$
is the measure $\mu_0$ that for some $\beta>2$ has finite $\beta$-order moment . %Old: for all $i\leq N$.
Then, the empirical measure
$\mu^N=\frac{1}{N}\sum_{i=1}^N\delta_{X^{i,N}}$ of \eqref{PS}
converges in the weak sense, when $N \to \infty$,  to the
unique weak solution of \eqref{SDE}.
\end{theorem}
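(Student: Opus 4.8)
The plan is to follow the classical martingale-problem route for propagation of chaos (à la Sznitman), in three stages: tightness, identification of the limit, and uniqueness. First I would establish tightness of the laws $\Pi^N$ of the empirical measures $\mu^N$ on $\Pp(C([0,T];\R^d))$. By standard arguments (e.g.\ the criterion that reduces tightness of $\Pi^N$ to tightness of the laws of $X^{1,N}$ by exchangeability), it suffices to prove a Kolmogorov-type moment bound $\E_{\Q^N}\big[|X_t^{1,N}-X_s^{1,N}|^{2m}\big]\lesssim |t-s|^{1+\varepsilon}$ uniformly in $N$. Writing $X_t^{1,N}-X_s^{1,N}$ as the drift integral plus a Brownian increment, the Brownian part is immediate; for the drift part one needs $\E_{\Q^N}\big[\big(\int_s^t \frac1N\sum_{j\neq 1} h_u(X_u^{1,N}-X_u^{j,N})\,du\big)^{2m}\big]$ to be small. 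This is exactly where the $L^q_t$--$L^p_x$ structure and a Khasminskii/Krylov-type estimate enter — controlled through the absolute continuity w.r.t.\ Wiener measure (the exponential martingale from the \emph{full} Girsanov transform of Theorem~\ref{PSexistence}, whose Novikov bound, while $N$-dependent, still yields a finite moment estimate sufficient for tightness at fixed $N$, and one argues uniformity via the partial transforms or a direct Krylov estimate under $\Q^N$).

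Second, and this is the heart of the argument, I would show that any weak limit point $\Pi^\infty$ of $(\Pi^N)$ is supported on solutions of the nonlinear martingale problem (MP). Fix a limit point and, along a subsequence, $\Pi^N\Rightarrow\Pi^\infty$. One must show that for $\Pi^\infty$-a.e.\ $Q\in\Pp(C([0,T];\R^d))$, $Q$ satisfies (i)--(iii). Property (i) is clear from the i.i.d.\ initial data. For the $L^r$-density bound (ii) on the marginals of the limit, I would transfer the Gaussian-type heat-kernel bound available under the full Girsanov change of measure (the density of $\Q^N$ w.r.t.\ Wiener measure controls marginals of $X^{1,N}$ by Hölder/Krylov estimates in $L^r$), pass it to the limit, and identify it with the required estimate $\|\rho_t\|_{L^r}\le C_T t^{-\frac d2(1-1/r)}$. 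Property (iii) is the delicate passage to the limit in the nonlinear drift: for $f\in C_b^2$ one considers the functional
\begin{equation*}
F(\mu)=\E_\mu\Big[\Big(\Phi_t-\Phi_s\Big)\,g(x(s_1),\dots,x(s_k))\Big],\qquad
\Phi_t:=f(x(t))-f(x(0))-\int_0^t\!\Big(\nabla f(x(u))\cdot\!\int b(u,x(u),y)\mu_u(dy)+\triangle f(x(u))\Big)du,
\end{equation*}
and must show $\E_{\Pi^\infty}[|F|]=0$. The obstruction is that $\mu\mapsto F(\mu)$ is \emph{not} continuous because $b$ is only defined off the null sets $\Nn_b(u)$ and is merely $L^p$-integrable; one cannot directly use weak convergence. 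The remedy, following~\cite{JTT}, is to write $F(\mu^N)$ back on the particle system as $\E_{\Q^N}\big[(\Phi_t^N-\Phi_s^N)g(\dots)\big]$ where $\Phi^N$ involves $\frac1N\sum_{j\neq 1}b(u,X_u^{1,N},X_u^{j,N})$; this is an $\F_\cdot$-martingale increment under $\Q^N$ up to the self-interaction term of size $O(1/N)$, so $\E_{\Q^N}[F(\mu^N)]=O(1/N)$. It then remains to show $\E_{\Pi^N}[F(\mu^N)]\to\E_{\Pi^\infty}[F(\mu)]$ despite the discontinuity — and this is where the \textbf{Partial Girsanov Transforms} do the work: under a change of measure that removes the drifts of the finitely many particles appearing in the test functional ($x(s_1),\dots,x(s_k)$ plus the index $1$), the relevant coordinates become Brownian, the singular terms $b(u,\cdot,X_u^{j,N})$ are integrated against a Gaussian law in the $j$-variables, and one obtains uniform integrability of the integrand together with an approximation of $b$ by bounded continuous kernels $b_\delta$ (convolution mollification, using the a.e.\ continuity of $b$ off $\Nn_b(u)$ and $L^p$ convergence) with errors controlled uniformly in $N$ by a Krylov/Khasminskii estimate of the type in the key Lemma~\ref{MainLemmas1}. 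Passing $N\to\infty$ with $b_\delta$ fixed (now genuinely continuous, so weak convergence applies) and then $\delta\to0$ yields $\E_{\Pi^\infty}[F(\mu)]=0$ for a countable determining family of $(f,g,s,t,s_1,\dots,s_k)$, hence $\Pi^\infty$-a.s.\ $\mu$ solves (MP).

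Third, I would invoke the uniqueness of the solution to (MP) — guaranteed under Assumption~\ref{H1} and the finite $\beta$-moment hypothesis by~\cite[Thm.~1.1]{RocknerZhang} as recorded in the Remark — to conclude that $\Pi^\infty=\delta_{\Q}$ for the unique weak solution $\Q$ of \eqref{SDE}. Since every subsequential limit equals this same Dirac mass, the full sequence $\Pi^N\Rightarrow\delta_\Q$, which is precisely the asserted convergence of $\mu^N$ to the unique weak solution of \eqref{SDE} (and, by exchangeability, $\Pi^N\Rightarrow\delta_\Q$ is equivalent to the usual $q$-chaoticity statement). The main obstacle, as indicated, is step two: controlling the singular, merely-$L^p$ drift in the limit. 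The full Girsanov weight has a Novikov/exponential bound that blows up as $N\to\infty$, so it is useless for uniform-in-$N$ estimates; the whole point of the partial transforms is that removing drifts of only $k<N$ particles costs a bound independent of $N$ (this is where $\frac dp+\frac 2q<1$ and $p,q>2$ are used, via Lemma~\ref{MainLemmas1}), which is exactly what makes the mollification-plus-weak-convergence scheme go through uniformly. Everything else is routine once this estimate is in hand.
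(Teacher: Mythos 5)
Your overall architecture (tightness, identification of limit points through the martingale problem, then uniqueness of (MP) to conclude $\Pi^N\Rightarrow\delta_\Q$) matches the paper's, and you correctly single out the partial Girsanov transforms as the device that gives bounds uniform in $N$. However, your identification step has a genuine logical gap as written: you only control the \emph{first} moment, $\E_{\Q^N}[F(\mu^N)]=O(1/N)$, and then propose to pass $\E_{\Pi^N}[F(\mu^N)]\to\E_{\Pi^\infty}[F]$. That chain yields only $\E_{\Pi^\infty}[F]=0$, which does not imply $F(\mu)=0$ for $\Pi^\infty$-a.e.\ $\mu$ (your stated goal $\E_{\Pi^\infty}[|F|]=0$): $F$ may change sign on the support of $\Pi^\infty$. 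This is precisely why the paper works with the second moment $\E[(G(\mu^N))^2]$, shown to be $O(1/N)$ via It\^o and orthogonality of the martingale increments (see \eqref{PoC:1}), and is then forced to handle four-index sums, hence the auxiliary four-fold empirical measure $\nu^N$ and the four-particle transform in Lemma~\ref{PoC:biglemma}. Your argument must be upgraded to $F(\mu^N)^2$ (or $|F(\mu^N)|$), after which you meet exactly those quadruple terms. A second concrete misstep: you propose to obtain the marginal density bound (ii) from the \emph{full} Girsanov density of $\Q^N$ w.r.t.\ Wiener measure, but, as you yourself observe later, its exponential bound degenerates as $N\to\infty$, so it cannot produce an estimate surviving the limit. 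The paper gets (ii) from the $r=1$ partial transform: $\E_{\Q^N}[\varphi(X^{1,N}_t)]=\E_{\Q^{1,N}}[Z^{(1)}_T\varphi(\widehat X^{1,N}_t)]$, H\"older, the uniform moment bound on $Z^{(1)}_T$ from Proposition~\ref{GirsanovKparticles}, the Gaussian bound \eqref{eq:gaussianLp}, and Riesz representation; the same $r=1$ transform gives the Kolmogorov bound for tightness.

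Beyond these gaps, your treatment of the singular drift in the limit is a genuinely different route from the paper's. You regularize $b$ by bounded continuous kernels $b_\delta$, pass $N\to\infty$ at fixed $\delta$ by ordinary weak convergence, and control the mollification error uniformly in $N$ by partial-Girsanov/Khasminskii estimates of the type of Lemma~\ref{MainLemmas1}. The paper never regularizes $b$: it shows, via the four-particle transform, Cauchy--Schwarz and Riesz, that the limiting finite-dimensional laws $\Q_{u_1,u_2,t_1,\dots,t_a}$ have $L^2$ densities, so that the functional $F$, which is continuous and bounded outside the Lebesgue-null sets $\mathcal N_b$ by the continuity hypothesis of the theorem, is a.e.\ continuous and bounded under the limit law; convergence then follows from this a.e.\ continuity together with an integrable domination again supplied by the partial transforms and Gaussian computations. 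Your scheme can be made rigorous (and would not even need the continuity of $b$ off $\mathcal N_b$), but the $\delta\to0$ error under the limit law has to be controlled by the density bound (ii) — which brings you back to the $r=1$ partial-transform estimate — and the uniform-in-$N$ error bounds require joint local $L^p$ approximation of $b$ together with Gaussian tail cutoffs, none of which is spelled out; as it stands these steps are only sketched, whereas they constitute the actual technical content of the proof.
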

\begin{remark}
 \label{rem:altH}
 Replacing everywhere in this section Assumption~\ref{H1}  by Assumption~\ref{H2}, all the results still hold.
 \end{remark}
 
 \section{Proof of Theorem \ref{PSexistence}}
\label{sec:existence}
 We start from a probability space $(\Omega,\Ff,(\Ff_t;\,0\leq t\leq
T),\W)$ on which $d$ - dimensional Brownian motions 
$(W^1, \dots, W^N)$ and the random variables $X_0^{i,N}$(see \eqref{PS}) are defined. Set $\bar{X}_t^{i,N} := X_0^{i,N} + W_t^i
~(t\leq T)$ and $\bar{X}:=(\bar{X}^{i,N},1\leq i\leq N)$.
Denote the drift terms in \eqref{PS} by  $b^{i,N}_t(x)$, $x\in C([0,T];\R^d)^N$,
and the vector of all the drifts as $B_t^N(x)=(b^{1,N}_t(x), \dots, b^{N,N}_t(x))$.
For a fixed $N \in \N$, consider
$$Z_T^N:= \exp \left\{ \int_0^T B_t^N(\bar{X}) \cdot dW_t - \frac{1}{2}\int_0^T \left| B_t^N(\bar{X}) \right|^2dt \right\}.$$
To prove Theorem \ref{PSexistence}, it suffices to prove the following Novikov condition holds true
(see e.g.~\cite[Chapter 3, Proposition 5.13]{KaratzasShreve}):
%Old 18 12
%(see e.g. \cite[Chapter 4, Section 4]{IkeWat}).
%\begin{proposition}
%\label{PS:novikovNpart}
For any $T>0$, $N \geq 1$, $\kappa >0$, there exists $C(T,N, \kappa)$ such that
\begin{equation} \label{ineq:exp-G-N}
\E_\W \left( \exp\left\{ \kappa \int_0^T |B_t^N(\bar{X})|^2dt\right\} \right)\leq
C(T,N, \kappa).
\end{equation}
%\end{proposition}
%\begin{proof}
Drop the index $N$ for simplicity. Using the definition of
$(B^N_t)$ and Jensen's inequality one has

  \begin{align*}
  &\E_{\W}\left[\exp\left\{\kappa\int_{0}^{T}\left|B_t^N(\bar{X})\right|^2\,dt\right\}\right]\leq\E_{\W}\left[\exp\left\{\frac{1}{N}\sum_{i=1}^{N}\frac{1}{N}\sum_{j=1,j\neq i}^N \int_{0}^{T}\kappa N |b(t,\bar{X_t}^i,\bar{X_t}^j)|^2\,dt\right\}\right],
\end{align*}
from which we deduce

$$   \E_{\W}\left[\exp\left\{\kappa\int_{0}^{T}\left|B_t^N(\bar{X})\right|^2\,dt\right\}\right]\leq \frac{1}{N}\sum_{i=1}^{N}\frac{1}{N}\sum_{j=1,j\neq i}^{N}
\E_{\W}\left[\exp\left\{\kappa
N\int_{0}^{T}|b(t,\bar{X_t}^i,\bar{X_t}^j)|^2\,dt\right\}\right].$$
Assume for a moment that for $i,j\leq N$ such that $j\neq i$ one has
\begin{equation}
\label{eq:forNovikov}
\E_{\W}\left[\exp\left\{\kappa
N\int_{0}^{T}|b(t,\bar{X_t}^i,\bar{X_t}^j)|^2\,dt\right\}\right]\leq C(T,N).
\end{equation}
Then \eqref{ineq:exp-G-N} is satisfied and the proof is finished. 

The rest of the proof will be devoted to establishing \eqref{eq:forNovikov}. Actually, we will prove the following more general statement: 
\begin{proposition}
\label{est1}
 Let $T>0$ and let Assumption~\ref{H1} hold. Let $w:=(w_t)$ be a $(\mathcal{G}_t)$-Brownian motion with an arbitrary
initial distribution $\mu_0$ on some probability space equipped with a
probability measure $\mathbb{P}$ and a filtration~$(\mathcal{G}_t)$. 
Suppose that the
filtered probability space is rich enough to support a continuous process $Y$ independent
%an $(\mathcal{G}_t)$-adapted
 of~$(w_t)$.~  For any
 $\alpha>0$, one has 
\begin{equation*}
\E_{\Ps}\left[ \exp\left\{ \alpha \int_0^T |b(t,w_t,Y_t)|^2 dt\right\}\right]
\leq C(T, \alpha),
\end{equation*}
where $C(T, \alpha)$ depends only on $T$ and $\alpha$, but
does neither depend on the law~$\mathcal{L}(Y)$ nor of $\mu_0$.
%the law of the process $X$.
\end{proposition}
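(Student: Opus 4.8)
The plan is to reduce the claim to the classical Krylov--R\"ockner/Khasminskii exponential-integrability estimate for a Brownian motion in an $L^q_tL^p_x$ potential, the new feature being the extra independent process $Y$. Set $V(t,x):=h_t(x)^2$, so that $|b(t,w_t,Y_t)|^2\le V(t,w_t-Y_t)$ and, by Assumption~\ref{H1}, $s\mapsto\|V(s,\cdot)\|_{L^{p/2}}=\|h_s\|_{L^p}^2$ lies in $L^{q/2}_{loc}(\R_+)$ with $p/2,q/2>1$. Since $Y$ is independent of $w$, Tonelli's theorem gives
\[
\E_{\Ps}\Big[\exp\Big\{\alpha\int_0^T|b(t,w_t,Y_t)|^2\,dt\Big\}\Big]=\int\E_{\Ps}\Big[\exp\Big\{\alpha\int_0^TV(t,w_t-y_t)\,dt\Big\}\Big]\,\mathcal L(Y)(dy),
\]
so it suffices to bound the integrand uniformly over continuous \emph{deterministic} paths $y\colon[0,T]\to\R^d$ and over the initial law $\mu_0$.

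First I would establish the linear building block: writing $\E^z$ for the law of $w$ started from $z\in\R^d$ at time $a$ and $(P_r)$ for the heat semigroup of $w$ (with Gaussian kernel $p_r$), for $0\le a<b\le T$ one has
\[
\Theta(a,b):=\sup_{z}\ \sup_{y}\ \E^{z}\!\Big[\int_a^bV(s,w_s-y_s)\,ds\Big]\ \le\ C_{d,p,q}\,(b-a)^{\gamma}\,\|h\|_{L^q([a,b];L^p)}^{2},\qquad \gamma:=\tfrac{q-2}{q}-\tfrac{d}{p}>0.
\]
Indeed, by the Markov property and a change of variables, $\E^z[V(s,w_s-y_s)]=(P_{s-a}V(s,\cdot))(z-y_s)\le C_{d,p}(s-a)^{-d/p}\|h_s\|_{L^p}^2$ (H\"older's inequality together with $\|p_r\|_{L^{(p/2)'}}\le C_{d,p}r^{-d/p}$); integrating in $s$ and using H\"older's inequality in time with exponents $q/2$ and $q/(q-2)$ produces the factors above, the time integral $\int_a^b(s-a)^{-\frac{dq}{p(q-2)}}\,ds$ converging \emph{precisely} when $\frac{d}{p}+\frac{2}{q}<1$. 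The decisive point is that, because $|b(t,x,y)|\le h_t(x-y)$ depends only on $x-y$ and the Gaussian kernel is translation invariant, the bound is independent of $y$ and of $\mu_0$.

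Then I would exponentiate and chain in the standard way. Since $\gamma>0$ and $\|h\|_{L^q([0,T];L^p)}<\infty$, choose a partition $0=T_0<\cdots<T_m=T$ with $m=m(T,\alpha)$ fine enough that $\alpha\,\Theta(T_i,T_{i+1})\le\tfrac12$ for all $i$. Expanding $\exp\{\alpha\int_{T_i}^{T_{i+1}}V\}$, symmetrising over the time simplex and peeling off the innermost integral via the Markov property, each layer contributes at most $\alpha\,\Theta(T_i,T_{i+1})\le\tfrac12$, so that $\sup_z\sup_y\E^z[\exp\{\alpha\int_{T_i}^{T_{i+1}}V(s,w_s-y_s)\,ds\}]\le\sum_{n\ge0}2^{-n}=2$. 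Conditioning successively at $T_{m-1},T_{m-2},\dots,T_0$ and invoking the Markov property of $w$ then gives $\E^{\mu_0}[\exp\{\alpha\int_0^TV(s,w_s-y_s)\,ds\}]\le 2^m$, uniformly in $y$ and $\mu_0$; integrating against $\mathcal L(Y)$ yields the claim with $C(T,\alpha)=2^{m(T,\alpha)}$.

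The main obstacle --- indeed the whole reason the statement is phrased with an arbitrary, law-unconstrained $Y$ and $\mu_0$ --- is to obtain a constant that does not deteriorate with $\mathcal L(Y)$; this is exactly what will make the partial Girsanov estimates used later non-divergent in $N$. It is resolved by integrating out the Gaussian increment of $w$ first, after which only the translation-invariant quantity $\|h_s\|_{L^p}$ survives and the rest is routine. The only other delicate point is the bookkeeping of the two H\"older inequalities (first in space, then in time) that leads to the condition $\frac{dq}{p(q-2)}<1$, which is precisely where the subcriticality $\frac{d}{p}+\frac{2}{q}<1$ of Assumption~\ref{H1} is consumed.
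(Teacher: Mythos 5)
Your proposal is correct and follows essentially the same route as the paper: independence of $Y$ reduces the claim to a bound uniform over deterministic paths, the linear estimate for $\int \E\,h_t^2(w_t-y_t)\,dt$ is obtained exactly as in Lemma~\ref{MainLemmas1} (Gaussian kernel bound plus H\"older in space with exponent $p/2$ and in time with $q/2$, consuming $\frac{d}{p}+\frac{2}{q}<1$), and the exponential bound on short intervals followed by chaining via successive conditioning is the content of Lemma~\ref{Mainlemmas3} and the proof of Proposition~\ref{est1}. The only cosmetic difference is that you package the short-interval step as Khasminskii's lemma with a $\sup_z$ formulation, while the paper writes the same series expansion with conditional expectations w.r.t.\ $(\mathcal{G}_t)$.
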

To prove Proposition \ref{est1}, we need some preparation in form of two auxiliary Lemmas. 
First, for $(t,x)\in [0,T]\times \R^d$, denote by  $g_t(x):=\frac{1}{(2 \pi t)^{d/2}}e^{-\frac{|x|^2}{2t}}$.
Note that for any $t>0$ and any $p\geq 1$, one has
 \begin{equation}
 \label{eq:gaussianLp}
\|g_t\|_p=\dfrac{C_p}{t^{\frac{d}{2}(1-\frac{1}{p})}}.
 \end{equation}
 From now on, when we work on  a probability space equipped with a probability measure $\Ps$ and a filtration $(\mathcal{G}_t)_{t\geq 0}$, for any $t>0$, we will denote by $\E^{\mathcal{G}_{t}}_\Ps$ the conditional expectation w.r.t. $\mathcal{G}_{t}$, i.e. $\E^{\mathcal{G}_{t}}_\Ps(\cdot):=\E_\Ps(\cdot | \mathcal{G}_{t})$.
 \begin{lemma}
\label{MainLemmas1}
%Let $T>0$. Let $(\Omega,\F, (\F_t;\,0\leq t\leq T),\P)$ be %Old 15 12:
%%$(\P, \Omega, \F_t)$
% a filtered probability space equipped with a Brownian motion $W$.%Old
%%15 12: $\mu_0$.
%Suppose the hypothesis $(H_b)$.
Suppose Assumption~\ref{H1} holds. 
Let $w:=(w_t)$ be a $(\mathcal{G}_t)$-Brownian motion with an arbitrary
initial distribution $\mu_0$ on some probability space equipped with a
probability measure $\mathbb{P}$ and a filtration~$(\mathcal{G}_t)$.
There exists a universal real number $C_0>0$ such that
$$ \forall x\in C([0,T];\R),~~\forall 0\leq t_1\leq t_2\leq T,~~
  \int_{t_1}^{t_2}\E^{\mathcal{G}_{t_1}}_\Ps |b(t,w_t,x_t)|^2dt
\leq C_0(T)
 (t_2-t_1)^{\frac{q-2}{q} - \frac{d}{p}} .$$
\end{lemma}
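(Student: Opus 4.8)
The plan is to reduce the whole statement to Young's inequality for convolutions together with Hölder's inequality in the time variable, exploiting that conditionally on $\mathcal{G}_{t_1}$ the increment $w_t-w_{t_1}$ is an independent centered Gaussian of variance $(t-t_1)I_d$.

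First, by Assumption~\ref{H1} one has $|b(t,w_t,x_t)|^2\le h_t(w_t-x_t)^2$, so it suffices to bound $\int_{t_1}^{t_2}\E^{\mathcal{G}_{t_1}}_\Ps[h_t(w_t-x_t)^2]\,dt$. Writing $w_t=w_{t_1}+(w_t-w_{t_1})$ for $t\ge t_1$, and using that $w_t-w_{t_1}\sim\mathcal{N}(0,(t-t_1)I_d)$ is independent of $\mathcal{G}_{t_1}$ while $x_t$ is deterministic, one gets the pointwise identity $\E^{\mathcal{G}_{t_1}}_\Ps[h_t(w_t-x_t)^2]=(h_t^2\ast g_{t-t_1})(w_{t_1}-x_t)$. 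I would then bound this by its sup-norm in the spatial argument and apply Young's inequality with conjugate exponents $(p/2,\,p/(p-2))$, which gives $\|h_t^2\ast g_{t-t_1}\|_\infty\le\|h_t^2\|_{p/2}\,\|g_{t-t_1}\|_{p/(p-2)}=\|h_t\|_p^2\,\|g_{t-t_1}\|_{p/(p-2)}$. Invoking \eqref{eq:gaussianLp} with exponent $p/(p-2)$ — whose Lebesgue exponent produces $\tfrac{d}{2}\bigl(1-\tfrac{p-2}{p}\bigr)=\tfrac dp$ — yields the $\omega$-uniform bound $\E^{\mathcal{G}_{t_1}}_\Ps[h_t(w_t-x_t)^2]\le C\,\|h_t\|_p^2\,(t-t_1)^{-d/p}$.

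It remains to integrate in $t$. Applying Hölder in time with conjugate exponents $q/2$ and $q/(q-2)$,
\[
\int_{t_1}^{t_2}\|h_t\|_p^2\,(t-t_1)^{-d/p}\,dt\le\Bigl(\int_{t_1}^{t_2}\|h_t\|_p^q\,dt\Bigr)^{2/q}\Bigl(\int_{t_1}^{t_2}(t-t_1)^{-\frac{dq}{p(q-2)}}\,dt\Bigr)^{\frac{q-2}{q}}.
\]
The first factor is at most $\|h\|_{L^q(0,T;L^p)}^2=:C_0(T)<\infty$. For the second, the exponent condition $\tfrac dp+\tfrac2q<1$ is precisely what makes $\tfrac{dq}{p(q-2)}<1$, so the singularity at $t=t_1$ is integrable and the integral equals $c\,(t_2-t_1)^{1-\frac{dq}{p(q-2)}}$; raising to the power $\tfrac{q-2}{q}$ gives $(t_2-t_1)^{\frac{q-2}{q}-\frac dp}$, which is exactly the claimed bound.

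The computation is essentially mechanical; the only points requiring care are the bookkeeping of the Hölder/Young exponents — in particular checking that the spatial loss $\tfrac dp$ and the temporal loss combine to give precisely the stated power $\tfrac{q-2}{q}-\tfrac dp$ — and the observation that the conditioning step goes through verbatim when the deterministic path $x$ is replaced by a process independent of $w$ (one simply conditions further on that process), which is what will be used in the proof of Proposition~\ref{est1}. There is no genuine analytic obstacle: the heart of the matter is just that the heat kernel lies in $L^{p/(p-2)}$ with an integrable time singularity under Assumption~\ref{H1}.
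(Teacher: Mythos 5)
Your argument is correct and is essentially the paper's own proof: conditioning on $\mathcal{G}_{t_1}$ turns the expectation into a convolution of $h_t^2$ with the Gaussian kernel $g_{t-t_1}$, the spatial estimate with exponents $\bigl(\tfrac p2,\tfrac{p}{p-2}\bigr)$ (your Young-inequality phrasing is just the pointwise Hölder bound the paper uses) gives $\|h_t\|_p^2\,\|g_{t-t_1}\|_{p/(p-2)}\le C\|h_t\|_p^2\,(t-t_1)^{-d/p}$ via \eqref{eq:gaussianLp}, and Hölder in time with $\bigl(\tfrac q2,\tfrac{q}{q-2}\bigr)$ together with $\tfrac dp+\tfrac 2q<1$ yields the stated power $(t_2-t_1)^{\frac{q-2}{q}-\frac dp}$. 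No gap; the exponent bookkeeping matches the paper's computation exactly.
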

\begin{proof}
%Knowing the density of $w_t-w_{t_1}$ and 
Using Assumption~\ref{H1}
%$(H_b)$,
one has
$$I:=\int_{t_1}^{t_2}\E^{\mathcal{G}_{t_1}}_\Ps |b(t,w_t,x_t)|^2dt \leq \int_{t_1}^{t_2}\int h^2_t(y+w_{t_1}-x_t)g_{t-t_1}(y)dy \ dt$$

Applying H\"older inequality in space with $\frac{p}{2}>1$ and afterwards in time with $\frac{q}{2}>1$, one has
$$I\leq \int_{t_1}^{t_2} \|h_t\|^2_{L^p(\R^d)}\|g_{t-t_1}\|_{L^{\frac{p}{p-2}}(\R^d)}\leq  \|h\|_{L^q((0,T);L^p(\R^d))} \left(\int_{t_1}^{t_2}\left(\|g_{t-t_1}\|_{L^{\frac{p}{p-2}}(\R^d)}\right)^{\frac{q}{q-2}} dt \right)^{\frac{q-2}{q}}. $$
According to \eqref{eq:gaussianLp}, we have
$$I\leq C_0(T)\left(  \int_{t_1}^{t_2} \frac{1}{(t-t_1)^{\frac{d}{p}\frac{q}{q-2}}} dt \right)^{\frac{q-2}{q}}. $$
For the last integral to be finite, one needs to have $\frac{d}{p}< \frac{q-2}{q}= 1-\frac{2}{q}$. This is exactly the constraint in Assumption~\ref{H1}.
%the hypothesis $(H_b)$. 
Hence the desired result.
%$I \leq C_0(T) t^{\frac{q-2}{q} - \frac{d}{p}}.$$

\end{proof}

\begin{lemma}
\label{Mainlemmas3}
Same assumptions as in Lemma \ref{MainLemmas1}. Let $C_0(T)$ be as in
Lemma \ref{MainLemmas1}. For any $\kappa>0$, there exists $C(T,\kappa)$ independent of $\mu_0$~
such that, for any $ 0\leq T_1\leq T_2 \leq T$ satisfying $  T_2-T_1 <
(C_0(T)\kappa)^{-\frac{1}{\frac{q-2}{q} - \frac{d}{p}}}$,
$$\forall x\in C([0,T];\R),~~\E^{\mathcal{G}_{T_1}}_\Ps \left[\exp \left\{\kappa \int_{T_1}^{T_2}
|b(t,w_t,x_t)|^2 dt\right\}\right]\leq C(T,\kappa).$$
\end{lemma}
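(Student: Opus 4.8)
The plan is to prove Lemma~\ref{Mainlemmas3} by the standard ``chaining the Khasminskii-type expansion'' argument, using Lemma~\ref{MainLemmas1} as the input that makes the geometric series converge. Concretely, I would expand the exponential as a power series,
\[
\E^{\mathcal{G}_{T_1}}_\Ps \left[\exp \left\{\kappa \int_{T_1}^{T_2} |b(t,w_t,x_t)|^2 \, dt\right\}\right] = \sum_{n\geq 0} \frac{\kappa^n}{n!} \, \E^{\mathcal{G}_{T_1}}_\Ps \left[\left(\int_{T_1}^{T_2} |b(t,w_t,x_t)|^2 \, dt\right)^n\right],
\]
and then estimate the $n$-th moment of the additive functional. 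Writing the $n$-th power as $n!$ times the integral over the simplex $T_1 \le t_1 \le \cdots \le t_n \le T_2$ of $\prod_{k=1}^n |b(t_k, w_{t_k}, x_{t_k})|^2$, I would peel off the integrals one at a time from the innermost (largest time): conditioning successively on $\mathcal{G}_{t_{n-1}}, \mathcal{G}_{t_{n-2}}, \dots$ and using the tower property together with the Markov property of $w$, each conditional integral $\int_{t_{k-1}}^{T_2}\E^{\mathcal{G}_{t_{k-1}}}_\Ps|b(t,w_t,x_t)|^2\,dt$ is bounded, by Lemma~\ref{MainLemmas1}, by $C_0(T)(T_2-T_1)^{\gamma}$ with $\gamma := \frac{q-2}{q}-\frac dp > 0$. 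This yields the bound $\big(C_0(T)(T_2-T_1)^{\gamma}\big)^n$ for the $n$-th moment, hence
\[
\E^{\mathcal{G}_{T_1}}_\Ps \left[\exp \left\{\kappa \int_{T_1}^{T_2} |b(t,w_t,x_t)|^2 \, dt\right\}\right] \le \sum_{n\geq 0} \big(\kappa\, C_0(T)(T_2-T_1)^{\gamma}\big)^n = \frac{1}{1 - \kappa\, C_0(T)(T_2-T_1)^{\gamma}},
\]
which is finite and independent of $\mu_0$ precisely when $T_2-T_1 < (C_0(T)\kappa)^{-1/\gamma}$, as assumed; one then sets $C(T,\kappa)$ to be, say, $\big(1 - \kappa C_0(T)(T_2-T_1)^{\gamma}\big)^{-1}$ bounded uniformly by choosing the threshold slightly smaller, e.g. requiring $\kappa C_0(T)(T_2-T_1)^\gamma \le 1/2$ so that $C(T,\kappa)=2$, or just keeping the bound as stated.

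The one genuinely delicate point — the main obstacle — is the inductive peeling step: Lemma~\ref{MainLemmas1} controls $\int_{t_1}^{t_2}\E^{\mathcal{G}_{t_1}}_\Ps|b(t,w_t,x_t)|^2\,dt$ for a \emph{deterministic} path $x\in C([0,T];\R)$ and a \emph{fixed} lower endpoint $t_1$, whereas in the simplex integral the innermost variable $t_n$ ranges over $[t_{n-1}, T_2]$ with $t_{n-1}$ itself random-in-law (an integration variable) and, more importantly, the conditioning $\sigma$-algebra must be $\mathcal{G}_{t_{n-1}}$ for the Markov property of $w$ to apply while $x$ remains frozen. I need to check that the uniformity of the constant $C_0(T)$ in Lemma~\ref{MainLemmas1} — it depends on neither $\mu_0$ nor the conditioning time — is exactly what licenses applying it with $w_{t_{n-1}}$ (a $\mathcal{G}_{t_{n-1}}$-measurable, hence ``initial-condition-like'') random starting point; this is the step where the ``initial distribution $\mu_0$ arbitrary'' hypothesis in Lemma~\ref{MainLemmas1} is used crucially. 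I would make this rigorous by a clean induction on $n$: define $\Phi_n(s,\omega)$ to be (a uniform bound on) $\E^{\mathcal{G}_s}_\Ps\big[(\int_s^{T_2}|b(t,w_t,x_t)|^2dt)^n\big]$ and show $\Phi_n \le n!\, (C_0(T)(T_2-T_1)^\gamma)^n$ by writing the $n$-th power as $n\int_s^{T_2}|b(t_1,w_{t_1},x_{t_1})|^2\big(\int_{t_1}^{T_2}|b|^2\big)^{n-1}dt_1$, conditioning on $\mathcal{G}_{t_1}$ inside, and invoking the induction hypothesis plus Lemma~\ref{MainLemmas1}.

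A couple of routine bookkeeping items I would also address: justifying the interchange of expectation and infinite sum (monotone convergence, all terms nonnegative), and noting that although Lemmas~\ref{MainLemmas1}–\ref{Mainlemmas3} are stated in dimension one for $x$, the same proof works verbatim in $\R^d$ with $g_t$ the $d$-dimensional heat kernel — which is all that is needed, since in the application to \eqref{eq:forNovikov} the process $Y$ (playing the role of $x$) is the path of another particle, and one conditions on its independent $\sigma$-algebra to reduce to the deterministic-path statement via Fubini. Finally, to get Proposition~\ref{est1} for an arbitrary horizon $T$ (not just below the smallness threshold), I would partition $[0,T]$ into finitely many subintervals each shorter than $(C_0(T)\alpha)^{-1/\gamma}$, apply Lemma~\ref{Mainlemmas3} on each piece, and chain the conditional estimates multiplicatively via the tower property — but that belongs to the proof of the Proposition rather than of this Lemma.
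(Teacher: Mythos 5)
Your proposal is correct and follows essentially the same route as the paper: expand the exponential, write the $n$-th moment as $n!$ times a simplex integral, peel off the innermost integral by successive conditioning and Fubini using Lemma~\ref{MainLemmas1} (whose constant is uniform in the conditioning time and initial law, which is exactly the point you flag), and sum the resulting geometric series under the stated smallness condition on $T_2-T_1$. The only cosmetic difference is that the paper passes to the limit in the partial sums via Fatou's lemma rather than monotone convergence, which is immaterial.
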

\begin{proof}
%We adapt the proof of Khasminskii's lemma in Simon~\cite{BSimon}. 
Admit
for a while we have shown that there exists a constant $C(\kappa, T)$
such that for any $M \in \N$
\begin{equation}
\label{prel:star}
\sum_{k=1}^M \frac{\kappa^k}{k!} \E^{\mathcal{G}_{T_1}}_\Ps \left(
\int_{T_1}^{T_2} |b(t,w_t,x_t)|^2 \ dt\right)^{k}\leq C(T,\kappa),
\end{equation}
provided that $T_2-T_1 < (C_0(T)\kappa)^{-\frac{1}{\frac{q-2}{q} - \frac{d}{p}}}$. The desired result
then follows from Fatou's lemma.

We now prove \eqref{prel:star}.

By the tower property of conditional expectation,
\begin{align*}
& \E^{\mathcal{G}_{T_1}}_{\Ps}\left[\left(\int_{T_1}^{T_2}
|b(t,w_t,x_t)|^2dt\right)^{k}\right]= k!
\int_{T_1}^{T_2}\int_{t_1}^{T_2} \int_{t_2}^{T_2}\cdots \int_{t_{k-2}}^{T_2}  \int_{t_{k-1}}^{T_2} \E^{\mathcal{G}_{T_1}}_{\Ps} \Big[ |b(t_1,w_{t_1},x_{t_1})|^2
\\
&
\times |b(t_2,w_{t_2},x_{t_2})|^2 \dots\times |b(t_{k-1},w_{t_{k-1}},x_{t_{k-1}})|^2\left(
 \E^{\mathcal{G}_{t_{k-1}}}_{\Ps} |b(t_{k},w_{t_{k}},x_{t_{k}})|^2 \right)\Big]\,dt_k \,dt_{k-1}\cdots\,dt_2
\,dt_1 .
\end{align*}
In view of Lemma \ref{MainLemmas1},
$$\int_{t_{k-1}}^{T_2} \E^{\mathcal{G}_{t_{k-1}}}_{\Ps}
|b(t_{k},w_{t_{k}},x_{t_{k}})|^2\,dt_k \leq C_0(T) (T_2-t_{k-1})^{\frac{q-2}{q} - \frac{d}{p}}\leq C_0(T)
(T_2-T_1)^{\frac{q-2}{q} - \frac{d}{p}}.$$
Therefore, by Fubini's theorem,
\begin{align*}
&\E^{\mathcal{G}_{T_1}}_{\Ps}\left[\left(\int_{T_1}^{T_2}
|b(t,w_t,x_t)|^2 dt\right)^{k}\right]\leq k!  C_0(T)
(T_2-T_1)^{\frac{q-2}{q} - \frac{d}{p}}
\int_{T_1}^{T_2} \int_{t_1}^{T_2}\int_{t_2}^{T_2}\cdots  \int_{t_{k-2}}^{T_2}\\
&  \times \E^{\mathcal{G}_{T_1}}_{\Ps}\Big[
|b(t_{1},w_{t_{1}},x_{t_{1}})|^2 |b(t_{2},w_{t_{2}},x_{t_{2}})|^2 \times \dots\times |b(t_{k-1},w_{t_{k-1}},x_{t_{k-1}})|^2\Big]dt_{k-1}\cdots\,dt_2
\,dt_1.
\end{align*}
Now we repeatedly condition with respect to $\mathcal{G}_{t_{k-i}}$ ($i\geq 2$)~ and
combine Lemma \ref{MainLemmas1} with Fubini's theorem. It comes:
%Repeating this procedure $k-1$ times,
\begin{multline*}
%\begin{split}
\E^{\mathcal{G}_{T_1}}_{\Ps}\left(\int_{T_1}^{T_2}
|b(t,w_t,x_t)|^2 dt\right)^{k}
\leq k!  (C_0(T) (T_2-T_1)^{\frac{q-2}{q} - \frac{d}{p}})^{k-1} \\
\times \int_{T_1}^{T_2}\E^{\mathcal{G}_{T_1}}_{\Ps} |b(t_{1},w_{t_{1}},x_{t_{1}})|^2 dt_1
%In view of Lemma \ref{MainLemmas1},
%\E^{\F^W_{T_1}}_{\Ps}\left[\left(\int_{T_1}^{T_2}
%F_t(W,x)dt\right)^{k}\right]
\leq k!  (C_0(T) (T_2-T_1)^{\frac{q-2}{q} - \frac{d}{p}} )^{k}.
%\end{split}
\end{multline*}
Thus, \eqref{prel:star} is satisfied provided that $T_2-T_1 < (C_0(T)\kappa)^{-\frac{1}{\frac{q-2}{q} - \frac{d}{p}}}$.
\end{proof}
%Finally, the proof of Proposition \ref{est1} is identical to the proof of \cite[Prop. 3.3]{JTT} where $F$ is replaced by $b^2$ and the previous lemma is used instead of \cite[Lemma 3.2]{JTT}. 

\begin{proof}[Proof of Proposition \ref{est1}]
Observe that
\begin{align}
\label{est1:1}
&\E_{\Ps} \exp\left\{ \alpha \int_0^T |b(t,w_t,Y_t)|^2dt\right\} =
\int_{C([0,T];\R^d)} \E_{\Ps}\exp\left\{ \alpha
\int_0^T|b(t,w_t,x_t)|^2dt\right\}  \Ps^{Y}(dx).
\end{align}
Set  $\delta: = \frac{1}{2C_0^2 T \alpha^2} \wedge T$, where $C_0$ is
as in Lemma~\ref{MainLemmas1}. Set $n:= \left[ \frac{T}{\delta}
\right]$.
%Split the integral
%\[
%\int_{0}^{T}F_t(W,x)\,dt=\sum_{m=0}^{n}\int_{(T-(m+1)\delta)\vee 0 }^{T-m\delta}F_t(W,x)\,dt.
%\]
  Then,
 \begin{align*}
 &\exp\left\{\alpha \int_{0}^{T}|b(t,w_t,x_t)|^2 dt\right\}
 ={\displaystyle\prod_{m=0}^{n}} \exp\left\{\alpha
\int_{(T-(m+1)\delta)\vee 0}^{T-m\delta}|b(t,w_t,x_t)|^2\,dt\right\},
\end{align*}
where $x$ is a fixed path.
Condition the right-hand side by $\mathcal{G}_{(T-\delta)\vee
0}$.
%\begin{align*}
%&\E_{\P}\left[ \exp\left\{ \alpha \int_0^T F_t(W,x)dt\right\}\right]
%\leq\E_{\P} \left[ {\displaystyle\prod_{m=1}^{n}} \exp\left\{\alpha
%\int_{(T-(m+1)\delta)\vee 0}^{T-m\delta}F_t(W,x)\,dt\right\}
%\E_\P^{\F^W_{T-\delta}}\left(\exp\{\alpha\int_{(T-\delta)\vee 0}^T
%F_t(W,x)dt\}\right) \right].
%\end{align*}
Notice that $\delta$ is small enough to be in the setting
of Lemma~\ref{Mainlemmas3}. Thus,
\begin{align*}
&\E_{\Ps} \exp\left\{ \alpha \int_0^T
|b(t,w_t,x_t)|^2dt\right\} \leq C(T,\alpha)\E_{\Ps}
{\displaystyle\prod_{m=1}^{n}} \exp\left\{\kappa N
\int_{(T-(m+1)\delta)\vee 0}^{T-m\delta}|b(t,w_t,x_t)|^2\,dt\right\}.
\end{align*}
Successively, conditioning by $\mathcal{G}_{(T-(m+1))\vee 0}$ for $
m=1, 2,
\dots n$ and using Lemma \ref{Mainlemmas3},
$$\E_{\Ps} \exp\left\{ \alpha \int_0^T
|b(t,w_t,x_t)|^2dt\right\} \leq C^n(T,\alpha)\E_{\Ps}
\exp\left\{\int_0^{(T-n\delta)\vee 0} b(t,w_t,x_t)dt\right\}
\leq  C(T,\alpha).$$
The proof is completed by plugging the preceding estimate
into~\eqref{est1:1}.
\end{proof}

\textbf{Sketch of the proof under Assumption~\ref{H2}.}
The main point is to adapt Lemma~\ref{MainLemmas1}. The rest is then straightforward. 
Starting as in the proof of Lemma~\ref{MainLemmas1}, we have, in view of Assumption~\ref{H2},
%$(\tilde H_b)$
\begin{align*}
I&:=\int_{t_1}^{t_2}\E^{\mathcal{G}_{t_1}}_\Ps |b(t,w_t,x_t)|^2dt \leq \int_{t_1}^{t_2}\int h^2_t(y+w_{t_1}-x_t)g_{t-t_1}(y)dy \ dt\\
&= \int_{t_1}^{t_2}\left(\int_{B_{(0,1)}} h^2_t(y)g_{t-t_1}(y-w_{t_1}+x_t)dy  + \int_{B^c_{(0,1)}} h^2_t(y)g_{t-t_1}(y-w_{t_1}+x_t)dy\right)\ dt
\end{align*}
Applying twice H\"older's inequality as before and bounding any integral of $g$ that is not on the whole space with the one on the whole space, we obtain 
$$I\leq C \|h\|_{L^q((0,T);L^p_{loc}(\R^d))}\left(  \int_{t_1}^{t_2} \frac{1}{(t-t_1)^{\frac{d}{p}\frac{q}{q-2}}} dt \right)^{\frac{q-2}{q}}+ H(T) (t_2-t_1).$$
Which leads to the same conclusion as the one of Lemma~\ref{MainLemmas1} with the following constant 
$$C_0(T) = \|h\|_{L^q((0,T);L^p_{loc}(\R^d))} +  T^{\frac{2}{q} + \frac{d}{p}} .$$
\section{Propagation of chaos}
\label{sec:propchaos}
\subsection{Girsanov transform for $1 \leq r<N$ particles}
\label{sec:GirsTrK}
For any integer $1\leq
r<N$, proceeding as in the proof of Theorem~\ref{PSexistence}  one gets
the existence of a weak solution on $[0,T]$ to
%Old 15 12: a unique weak
%solution to
\begin{equation}
\label{PS_Kdrift}
\begin{cases}
&d\widehat{X}_t^{l,N}=  dW_t^l, \quad 1\leq l\leq r,\\
& d \widehat{X}_t^{i,N}=  \left\{\frac{1}{N} \sum_{j=r+1}^N 
b(t,\widehat{X}_t^{i,N},\widehat{X}_t^{j,N})\right\}
dt + dW_t^i, \quad r+1\leq i\leq N,\\
& \widehat{X}_0^{i,N} %Old: X}_0^{i,N}
 \text{  i.i.d. and independent of }(W):=(W^i, 1\leq i\leq N).
\end{cases}
\end{equation}
Below we set $\hat{X}:=(\hat{X}^{i,N}, 1\leq i\leq N)$ and we denote by
$\Q^{r,N}$ the probability measure under which $\hat{X}$ is well
defined.
Notice that
$(\widehat{X}^{l,N}, 1\leq l\leq r)$ is independent of
$(\widehat{X}^{i,N}, r+1\leq i\leq N)$.  We now study the exponential
local martingale associated to the change of drift between
\eqref{PS} and \eqref{PS_Kdrift}. For $x\in C([0,T];\R^d)^N$ set
\begin{equation*}
 \beta^{(r)}_t(x):= \Big(b_t^{1,N}(x), \dots, b_t^{r,N}(x), \frac{1}{N}
 \sum_{i=1}^r b(t,x_t^{r+1},x^{i}_t), \dots,
\frac{1}{N} \sum_{i=1}^r  b(t,x_t^{N},x^{i}_t)  \Big) .
\end{equation*}
In the sequel we will need uniform w.r.t~$N$ bounds for moments of
\begin{equation} \label{def:Z^(k)}
Z_T^{(r)}:= \exp\left\{ -\int_0^T \beta^{(r)}_t(\widehat{X}) \cdot
dW_t -\frac{1}{2}\int_0^T| \beta^{(r)}_t(\widehat{X})|^2 dt \right\}.
\end{equation}
\begin{proposition}
\label{GirsanovKparticles}
For any $T>0$, $\gamma>0$ and $r\geq 1$ there exists  $N_0 \geq r$ and
$ C(T, \gamma, r)$ s.t.
\begin{equation*}
\forall N\geq N_0, \quad  \E_{\Q^{r,N}} \exp\left\{ \gamma
\int_0^T|\beta^{(r)}_t(\widehat{X})|^2dt\right\}\leq C(T, \gamma, r).
\end{equation*}
\end{proposition}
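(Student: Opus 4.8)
The plan is to bound the exponential moment of $\int_0^T |\beta^{(r)}_t(\widehat X)|^2\,dt$ by splitting $\beta^{(r)}$ into its two blocks of coordinates and handling each with the tools already developed. Write $|\beta^{(r)}_t(\widehat X)|^2 = \sum_{l=1}^r |b^{l,N}_t(\widehat X)|^2 + \sum_{i=r+1}^N \big|\tfrac{1}{N}\sum_{m=1}^r b(t,\widehat X^i_t,\widehat X^m_t)\big|^2$. By Cauchy--Schwarz (the first block has $r$ terms, each $b^{l,N}_t$ averages $N-1$ kernels) and crude bounds one gets
\begin{align*}
|\beta^{(r)}_t(\widehat X)|^2 \le \frac{1}{N}\sum_{l=1}^r \sum_{j=r+1}^N |b(t,\widehat X^l_t,\widehat X^j_t)|^2 + \frac{C_r}{N}\sum_{i=r+1}^N \sum_{m=1}^r |b(t,\widehat X^i_t,\widehat X^m_t)|^2,
\end{align*}
so that, up to renaming, every kernel appearing involves one index in $\{1,\dots,r\}$ and one index in $\{r+1,\dots,N\}$; crucially the exponent $\gamma N$ that would normally blow things up is absorbed because the $\tfrac1N$ prefactors survive. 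Applying Jensen/H\"older to turn $\exp$ of a normalized sum into a normalized sum of $\exp$'s, one reduces (just as in the proof of Theorem~\ref{PSexistence}) to showing that for a fixed pair $(l,j)$ with $l\le r<j$, $\E_{\Q^{r,N}}\exp\{ \gamma C_r \int_0^T |b(t,\widehat X^l_t,\widehat X^j_t)|^2\,dt\}$ is bounded uniformly in $N$.

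The key point is that under $\Q^{r,N}$ the process $\widehat X^l$ (for $l\le r$) is exactly a Brownian motion $W^l$ with initial law $\mu_0$, and it is \emph{independent} of the whole block $(\widehat X^{i,N}, r+1\le i\le N)$; in particular it is independent of $\widehat X^{j,N}$ for $j>r$. Therefore Proposition~\ref{est1} applies verbatim with $w = \widehat X^l$ (a $(\mathcal G_t)$-Brownian motion), $Y = \widehat X^{j,N}$ the independent continuous process, and $\alpha = \gamma C_r$, giving a bound $C(T,\gamma,r)$ that does not depend on $N$, nor on the law of $\widehat X^{j,N}$, nor on $\mu_0$. Summing back over the $O(r(N-r)) + O(r(N-r))$ pairs with their $\tfrac1N$ weights keeps the final constant of the form $C(T,\gamma,r)$, and the dependence of $N_0$ on $r$ only enters through the requirement $N>r$ so that both blocks are nonempty; one may simply take $N_0 = r+1$.

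The main obstacle, and the reason the partial transform is introduced in the first place, is organizing the Cauchy--Schwarz/Jensen step so that no factor of $N$ escapes from under a $\tfrac1N$: one must be careful that $\beta^{(r)}$'s first block consists of the \emph{full} drifts $b^{l,N}_t = \tfrac1N\sum_{j\ne l} b(t,\widehat X^l_t,\widehat X^j_t)$, which average over \emph{all} other particles (including indices $\le r$), so a double application of Jensen is needed and one must check that the self-interaction-excluded and within-block terms are handled (either absorbed into the same estimate by independence when both indices are $\le r$ --- using that $W^l, W^{l'}$ are independent Brownian motions --- or bounded the same way). Once the reduction to a single pair is in place, Proposition~\ref{est1} does all the analytic work, so the argument is essentially bookkeeping plus the independence structure built into \eqref{PS_Kdrift}.
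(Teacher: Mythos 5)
Your proposal is correct and follows essentially the same route as the paper: decompose $|\beta^{(r)}|^2$ into the Brownian block and the drifted block, use Cauchy--Schwarz/Jensen/H\"older to reduce to single-pair exponential moments whose exponents stay bounded in $N$ (the $\tfrac1N$ prefactors absorbing the would-be factor $N$), and invoke Proposition~\ref{est1} via the independence of $(\widehat X^{l,N})_{l\le r}$ from $(\widehat X^{i,N})_{i>r}$. Note only that your displayed pointwise bound omits the within-block pairs $(l,m)$ with $l,m\le r$ coming from the full drifts $b^{l,N}$, but you correctly flag and handle these in your closing paragraph (two independent Brownian motions, same estimate), which is exactly how the paper's bound reads.
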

\begin{proof}
For $x\in C([0,T];\R^d)^N$, one has
%\jeff{,j\neq i} not necessary
\begin{align*}
|\beta^{(r)}_t(x)|^2&= \sum_{i=1}^r \left( \frac{1}{N}\sum^N_{{j=1}}
b(t,x_t^i,x_t^j)\right)^2
+
\frac{1}{N^2}\sum_{j=1}^{N-r}\left(\sum_{i=1}^r 
b(t,x^{r+j}_t,x^{i}_t)\right)^2.
\end{align*}
%Old:
%$$|\beta^{(k)}_t(x)|^2= \sum_{i=1}^k \left( \frac{1}{N}\sum_{j=1}^N
%\int_0^t K_{t-s}(x_t^i-x_s^j)ds\right)^2+
%\frac{1}{N^2}\sum_{j=1}^{N-k}\left(\sum_{i=1}^k \int_0^t
%K_{t-s}(x^{k+j}_t-x^{i}_s)ds\right)^2. $$
By Jensen's inequality,
$$|\beta^{(r)}_t|^2 \leq \frac{1}{N}\sum_{i=1}^r \sum_{j=1}^N |b(t,x_t^i,x_t^j)|^2 + \frac{r}{N^2}\sum_{j=1}^{N-r}\sum_{i=1}^r |b(t,x^{r+j}_t,x^{i}_t)|^2. $$
For simplicity we below write $\E$ (respectively, $\hat{X}^i$) instead
of
$\E_{\Q^{r,N}}$ (respectively, $\hat{X}^{i,N}$).
%again below drop the index $N$.
Observe that
%Applying Cauchy-Schwarz inequality and after applying H\"{o}lder's
%inequality,
\begin{align*}
& \E\exp\Big\{ \gamma
\int_0^T|\beta^{(r)}_t(\widehat{X})|^2dt\Big\}\\
& \leq  \Big( \E \exp\Big\{\sum_{i=1}^r \frac{2\gamma
}{N}\sum_{j=1}^N\int_0^T |b(t,\widehat{X}_t^i,\widehat{X}_t^j)|^2
dt\Big\}\Big)^{1/2}  \Big( \E \exp\Big\{  \frac{2\gamma  r}{N^2}
\sum_{j=1}^{N-r}\sum_{i=1}^r
\int_0^T|b(t,\widehat{X}_t^{r+j},\widehat{X}^i_t)|^2  dt\Big\}\Big)^{1/2}\\
&\leq  \Big( \prod_{i=1}^r \frac{1}{N}
\sum_{j=1}^N\E\exp\Big\{2\gamma  r \int_0^T
|b(t,\widehat{X}_t^i,\widehat{X}_t^j)|^2  dt\Big\}  \Big)^{\frac{1}{2r}}
%\\
%&~~~~~~~~~~~~~~~~~~~~~~~~~~~~~~~~~~~~~~~~\times
\Big( \prod_{j=1}^{N-r} \frac{1}{r}\sum_{i=1}^r
\E \exp \Big\{\frac{2\gamma  r^2}{N}
\int_0^T|b(t,\widehat{X}_t^{r+j},\widehat{X}_t^i)  dt \Big\}
\Big)^{\frac{1}{2(N-r)}}.
\end{align*}
In view of Proposition \ref{est1}, the proof is finished.
\end{proof}

\subsection{Tightness}
We start with showing the tightness of $\{\mu^{N}\}$ and of an auxiliary empirical measure which is needed in the sequel.
\begin{lemma}
\label{Tightness:crit}
Let $\Q^N$ be as above.
The sequence $\{\mu^{N}\}$ is tight under $\Q^N$. In addition, let
$\nu^N:= \frac{1}{N^4}\sum_{i,j,k,l=1}^N
\delta_{X^{i,N}_.,X^{j,N}_.,X^{k,N}_.,X^{l,N}_.}$. The sequence
$\{\nu^{N}\}$ is tight under $\Q^N$.
\end{lemma}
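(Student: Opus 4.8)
The plan is to use the standard Aldous–Kolmogorov tightness criterion for laws on $C([0,T];\R^d)$, lifted to the empirical measure via the classical observation (see e.g.~\cite{Sznitman}) that tightness of the $\{\mu^N\}$ in $\Pp(C([0,T];\R^d))$ follows from tightness of the laws of the individual particles $X^{1,N}$ (by exchangeability), and likewise tightness of $\{\nu^N\}$ in $\Pp(C([0,T];\R^d)^4)$ follows from tightness of the law of the quadruple $(X^{1,N},X^{2,N},X^{3,N},X^{4,N})$ under $\Q^N$. So the whole lemma reduces to a single moment estimate: I want to show that for some $\theta>1$ and some constant $C=C(T)$ independent of $N$,
\begin{equation*}
\E_{\Q^N}\left|X^{i,N}_t - X^{i,N}_s\right|^{2\theta} \leq C\,|t-s|^{\theta}, \qquad 0\leq s\leq t\leq T,
\end{equation*}
for $i=1$ (hence for all $i$ by exchangeability), together with tightness of the initial laws $\mu_0$ (immediate, since $\mu_0$ is a fixed measure with a finite $\beta$-moment, $\beta>2$). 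The increment bound plus Kolmogorov's continuity criterion gives tightness of $\mathcal L_{\Q^N}(X^{1,N})$, and then the abstract criterion upgrades this to tightness of $\{\mu^N\}$ and $\{\nu^N\}$.

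The key step is controlling $\E_{\Q^N}|X^{i,N}_t - X^{i,N}_s|^{2\theta}$. Writing $X^{i,N}_t - X^{i,N}_s = \int_s^t b^{i,N}_u(X^N)\,du + \sqrt 2\,(W^i_t - W^i_s)$, the Brownian part contributes $C|t-s|^\theta$ by Burkholder–Davis–Gundy. For the drift part, by Jensen (or Cauchy–Schwarz in time) $\left|\int_s^t b^{i,N}_u(X^N)\,du\right|^{2\theta}\leq |t-s|^{\theta}\left(\int_s^t |b^{i,N}_u(X^N)|^2\,du\right)^{\theta}$, so it suffices to bound $\E_{\Q^N}\big(\int_0^T |b^{i,N}_u(X^N)|^2\,du\big)^{\theta}$ uniformly in $N$. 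Here I would move from $\Q^N$ back to the Wiener reference measure $\W$ via the Girsanov density $Z^N_T$ (equivalently, apply the change of measure implicit in Theorem~\ref{PSexistence}): for any conjugate exponents, $\E_{\Q^N}(\cdot) = \E_{\W}((\cdot)\,Z^N_T)\leq \big(\E_{\W}(\cdot)^{a}\big)^{1/a}\big(\E_{\W}(Z^N_T)^{a'}\big)^{1/a'}$. The issue is that $\E_{\W}(Z^N_T)^{a'}$ blows up with $N$ — exactly the phenomenon flagged in the introduction. The clean fix is to not remove \emph{all} drifts but to use the partial Girsanov transform of Section~\ref{sec:GirsTrK}: under $\Q^{1,N}$ the particle $\widehat X^{1,N}$ is a genuine Brownian motion, and the density $Z^{(1)}_T$ relating $\Q^N$ to $\Q^{1,N}$ has moments bounded uniformly in $N$ by Proposition~\ref{GirsanovKparticles}. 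Then
\begin{equation*}
\E_{\Q^N}\Big(\int_0^T |b^{1,N}_u(X^N)|^2\,du\Big)^{\theta}
\leq \Big(\E_{\Q^{1,N}}\Big(\int_0^T |b^{1,N}_u(\widehat X^N)|^2\,du\Big)^{\theta a}\Big)^{1/a}\big(\E_{\Q^{1,N}}(Z^{(1)}_T)^{a'}\big)^{1/a'},
\end{equation*}
the second factor being $N$-uniformly bounded by Proposition~\ref{GirsanovKparticles}, and the first factor handled by expanding $b^{1,N}_u = \frac1N\sum_{j\neq 1} b(u,\widehat X^1_u,\widehat X^j_u)$, applying Jensen to pull the $\frac1N\sum_j$ outside the power, and using that under $\Q^{1,N}$ the process $\widehat X^{1,N}=:w$ is a Brownian motion independent of each $\widehat X^{j,N}$ ($j\geq 2$), so that the moment bound from Lemma~\ref{Mainlemmas3} (via the series argument in Proposition~\ref{est1}) gives $\E_{\Q^{1,N}}\big(\int_0^T|b(u,w_u,\widehat X^j_u)|^2du\big)^{m}\leq C(T,m)$ for every $m$, uniformly in $j$ and $N$.

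For $\{\nu^N\}$ one argues identically but starts from the partial Girsanov transform with $r=4$: under $\Q^{4,N}$ the quadruple $(\widehat X^{1,N},\dots,\widehat X^{4,N})$ consists of four independent Brownian motions independent of the remaining $N-4$ particles, the density $Z^{(4)}_T$ has $N$-uniform moments by Proposition~\ref{GirsanovKparticles} (with $r=4$, for $N\geq N_0$), and the same Cauchy–Schwarz-plus-Lemma~\ref{Mainlemmas3} computation yields $\E_{\Q^N}|X^{i,N}_t-X^{i,N}_s|^{2\theta}\leq C|t-s|^\theta$ for $i\in\{1,2,3,4\}$, hence tightness of the law of the quadruple, hence of $\{\nu^N\}$. \textbf{The main obstacle} is precisely the organisation of the change of measure: one must be careful to route all drift estimates through the \emph{partial} transforms $\Q^{r,N}$ (with $r=1$ for $\mu^N$, $r=4$ for $\nu^N$) rather than the full transform, since only then are the exponential-martingale moments controlled uniformly in $N$; once that is set up, everything reduces to the already-proven Proposition~\ref{est1}/Lemma~\ref{Mainlemmas3} together with BDG and Kolmogorov's criterion.
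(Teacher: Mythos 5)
Your proposal is correct and rests on the same pillars as the paper's proof --- Sznitman's reduction of the tightness of $\{\mu^N\}$ and $\{\nu^N\}$ to tightness of the intensity measures (hence, by exchangeability, of $\mathrm{Law}(X^{1,N})$), a Kolmogorov-type moment bound on increments, and the partial Girsanov transform of Section~\ref{sec:GirsTrK} together with Proposition~\ref{GirsanovKparticles} --- but you execute the key moment estimate differently. The paper never splits drift and noise: it writes $\E_{\Q^N}|X^{1,N}_t-X^{1,N}_s|^4=\E_{\Q^{1,N}}\big[(Z^{(1)}_T)^{-1}|\widehat X^{1,N}_t-\widehat X^{1,N}_s|^4\big]$ and applies a single Cauchy--Schwarz, so the increment is that of a genuine Brownian motion under $\Q^{1,N}$ and the only quantity to control is $\E_{\Q^{1,N}}[(Z^{(1)}_T)^{-2}]$, which is handled by adding and subtracting $3\int_0^T|\beta^{(1)}_t|^2dt$ and invoking Proposition~\ref{GirsanovKparticles} with $\gamma=6$. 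You instead keep the decomposition $X^{1,N}_t-X^{1,N}_s=\int_s^t b^{1,N}_u\,du+\sqrt{2}\,(W^1_t-W^1_s)$ under $\Q^N$, treat the noise by the Burkholder--Davis--Gundy inequality, transfer only the drift functional to $\Q^{1,N}$ by H\"older, and bound it via Jensen and the polynomial moments that follow from Proposition~\ref{est1}/Lemma~\ref{Mainlemmas3}; this works, at the price of an extra estimate (the drift moments) that the paper's route avoids. Two points to tighten: first, Proposition~\ref{GirsanovKparticles} bounds exponential moments of $\int_0^T|\beta^{(r)}_t(\widehat X)|^2dt$, not directly moments of $Z^{(r)}_T$ or of its inverse, so your assertion that ``the density has moments bounded uniformly in $N$ by Proposition~\ref{GirsanovKparticles}'' needs the standard intermediate step of completing the square in the stochastic exponential and applying Cauchy--Schwarz (exactly the manipulation the paper spells out for $(Z^{(1)}_T)^{-2}$), and you should check which of $Z^{(1)}_T$ or $(Z^{(1)}_T)^{-1}$ is the density in the direction of change of measure you use; second, your reduction of the tightness of $\{\nu^N\}$ to the law of the quadruple $(X^{1,N},\dots,X^{4,N})$ and the ensuing use of the $r=4$ transform is unnecessary and, as stated, slightly imprecise, because the intensity of $\nu^N$ contains diagonal terms with repeated indices; since tightness of measures on a product space follows from tightness of the coordinate marginal families, the single-particle bound already yields tightness of $\{\nu^N\}$, which is why the paper only needs the $r=1$ transform here.
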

\begin{proof}
The tightness of $\{\mu^{N}\}$,
respectively $\{\nu^{N}\}$, results from the tightness of the
intensity
measure $\{\E_{\Q^N} \mu^N(\cdot)\}$, respectively$\{\E_{\Q^N}
\nu^N(\cdot)\}$: See Sznitman~\cite[Prop. 2.2-ii]{Sznitman}. %Old: See Sznitman~\cite{Sznitman}.
By symmetry, in both cases it suffices to check the
tightness of $\{\text{Law}(X^{1,N})\}$. We aim to prove
\begin{equation}
\label{PS:KolmCrit}
\exists C>0, \forall N\geq N_0, \quad
\E_{\Q^N}[|X_t^{1,N}-X_s^{1,N}|^4] \leq C_T |t-s|^2, \quad 0\leq
s,t\leq T,
\end{equation}
where $N_0$ is as in Proposition \ref{GirsanovKparticles}. Let
$Z_T^{(1)}$ be as in~(\ref{def:Z^(k)}). One has
$$\E_{\Q^N}[|X_t^{1,N}-X_s^{1,N}|^4] =
\E_{\Q^{1,N}}[(Z_T^{(1)})^{-1}|\widehat{X}_t^{1,N}-\widehat{X}_s^{1,N}|^4].
$$
As $\widehat{X}^{1,N}$ is a Brownian motion under
$\Q^{1,N}$,
%Applying Cauchy-Schwarz inequality,
$$\E_{\Q^N}[|X_t^{1,N}-X_s^{1,N}|^4]\leq
(\E_{\Q^{1,N}}[(Z_T^{(1)})^{-2}])^{1/2}
(\E_{\Q^{1,N}}[|\widehat{X}_t^{1,N}-\widehat{X}_s^{1,N}|^8])^{1/2}\leq
(\E_{\Q^{1,N}}[(Z_T^{(1)})^{-2}])^{1/2}  C |t-s|^2.$$
Observe that, for a Brownian motion $(W^\sharp)$ under $\Q^{1,N}$,
$$\E_{\Q^{1,N}}[(Z_T^{(1)})^{-2}]=\E_{\Q^{1,N}} \exp\left\{ 2
\int_0^T\beta_t^{(1)}(\hat{X})\cdot dW_t^\sharp - \int_0^T
|\beta^{(1)}_t(\hat{X})|^2dt\right\} .$$
Adding and subtracting  $3\int_0^T|\beta^{(1)}_t|^2dt$ and applying
again the Cauchy-Schwarz inequality,
$$\E_{\Q^{1,N}}[(Z_T^{(1)})^{-2}]\leq \left(\E_{\Q^{1,N}}
\exp\left\{6\int_0^T|\beta_t^{(1)}(\hat{X})|^2dt\right\}\right)^{1/2}.$$
Applying Proposition \ref{GirsanovKparticles} with $k=1$ and
$\gamma=6$, we obtain the desired result.
%$$\E_{\P^N}[|X_t^{1,N}-X_s^{1,N}|^4] \leq C(T, \chi) |t-s|^2.$$
\end{proof}
\subsection{Convergence}
To prove Theorem~\ref{th:PSconvergence} we have to show that any limit
point of $\{\text{Law}(\mu^N)\}$ is $\delta_\Q$, where $\Q$ is the
unique solution to (MP).

Let $\phi \in C_b(\R^{ad})$, $f\in C_b^2(\R^d)$, $0<t_1< \cdots <t_a\leq s
<t \leq T$ and $m\in \Pp(C[0,T];\R^d)$. Set
\begin{multline*}
G(m):= \int_{(C[0,T];\R^d)^2} \phi(x^1_{t_1}, \dots, x^1_{t_a})
\Big(f(x^1_t) -f(x^1_s) \\
-\frac{1}{2}\int_s^t \triangle f(x^1_u)du - \int_s^t
\nabla f(x^1_u) \cdot b(u,x^1_u,x^2_u) du\Big)dm(x^1)\otimes dm(x^2).
\end{multline*}
We start with showing that
\begin{equation}
\label{PoC:1}
\lim_{N \to \infty}\E [\left(G(\mu^N)\right)^2]=0.
\end{equation}
Observe that
\begin{align*}
& G(\mu^N)= \frac{1}{N}\sum_{i=1}^N \phi(X^{i,N}_{t_1}, \dots, X^{i,N}_{t_a})  \Big(f(X_t^{i,N})-f(X_s^{i,N})- \frac{1}{2}\int_s^t\triangle f(X^{i,N}_{u})du   \\
&  -\frac{1}{N} \sum_{j=1}^N \int_s^t \nabla f(X^{i,N}_u) \cdot
%\1_{\{X^{i,N}_u \neq X^{j,N}_u\}}
b(u,X^{i,N}_u,X^{j,N}_u) \ du \Big).
\end{align*}
Apply It\^o's formula to $\frac{1}{N}\sum_{i=1}^N(f(X_t^{i,N})-f(X_s^{i,N}))$, it is easy to verify that $\E [\left(G(\mu^N)\right)^{2}]\leq\frac{C}{N}$.
Thus, \eqref{PoC:1} holds true.

Suppose for a while we have proven the following lemma:
\begin{lemma}
\label{PoC:biglemma}
Let $\Pi^\infty \in \mathcal{P}(\mathcal{P}(C([0,T];\R^d)^4))$ be a limit point of $\{\text{law}(\nu^N)\}$.
%(remember Proposition \ref{Tightness:crit}).
Then
\begin{equation}
\label{PoC:BigLimit}
\begin{split}
& \lim_{N \to \infty}\E [\left(G(\mu^N)\right)^2]=
\int_{\mathcal{P}(C([0,T];\R^d)^4)} \left\{ \int_{C([0,T];\R^d)^4} \Big[
f(x^1_t)
-f(x^1_s)-\frac{1}{2}\int_s^t \triangle f(x^1_u)du \right. \\
&\left. ~~ - \int_s^t \nabla f (x^1_u)\cdot b(u,x^1_u,x^2_u) du\Big]
\times \Big[
f(x^3_t)
-f(x^3_s)-\frac{1}{2}\int_s^t \triangle f(x^3_u)du \right. \\
&\left. ~~ - \int_s^t \nabla f (x^3_u)\cdot b(u,x^3_u,x^4_u) du\Big] \times \phi(x^1_{t_1}, \dots, x^1_{t_a}) \phi(x^3_{t_1}, \dots, x^3_{t_a})d\nu(x^1,\ldots,x^4)
\vphantom{\frac15}\right\} d\Pi^\infty(\nu),
\end{split}
\end{equation}
%\1_{\{x^1_u\neq x^2_u\}}
and
\begin{enumerate}[i)]
\item Any $\nu \in \mathcal{P}(C([0,T];\R^d)^4)$ belonging to the support of $\Pi^\infty $ is a product measure: $\nu = \nu^1 \otimes \nu^1 \otimes \nu^1
\otimes \nu^1$.
\item For any $t\in (0,T]$, the time marginal $\nu^1_t$ of $\nu^1$ has
a density $\rho_t^1$ which satisfies for any $r>1$
$$\exists C_T, ~ \forall 0<t\leq T, ~ ~ \|\rho_t^1\|_{L^r(\R^d)}\leq \frac{C_T}{t^{\frac{d}{2}(1-\frac{1}{r})}}.$$
\end{enumerate}
\end{lemma}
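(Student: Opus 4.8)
\textbf{Proof plan for Lemma~\ref{PoC:biglemma}.}

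The plan is to split the statement into its three components and treat them in the order \eqref{PoC:BigLimit}, then i), then ii), since the first is essentially a rewriting step and the other two carry the probabilistic content. For \eqref{PoC:BigLimit}, I would start from the expansion of $G(\mu^N)$ given just above the lemma and write $\E[(G(\mu^N))^2]$ as an integral against $\E_{\Q^N}\nu^N$, i.e. against the intensity measure of the four-particle empirical measure $\nu^N$: indeed $(G(m))^2$ is of the form $\int H\,dm^{\otimes 4}$ for the obvious functional $H(x^1,x^2,x^3,x^4)$ built from the two bracketed increments and the two copies of $\phi$. The subtlety is that $H$ is \emph{not} bounded and not continuous, because of the factors $\int_s^t\nabla f(x^1_u)\cdot b(u,x^1_u,x^2_u)\,du$ — both the singularity of $b$ and the cut-off $\mathbbm{1}_{\{(x^1_u,x^2_u)\notin\Nn_b(u)\}}$ obstruct a direct application of the definition of weak convergence of $\text{law}(\nu^N)$ along the subsequence converging to $\Pi^\infty$. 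So the real work here is a uniform integrability / approximation argument: approximate $b$ by bounded continuous kernels $b^\varepsilon$ (truncation plus mollification), pass to the limit $N\to\infty$ for the bounded continuous functional $H^\varepsilon$ using the tightness of $\{\nu^N\}$ from Lemma~\ref{Tightness:crit}, and then send $\varepsilon\to 0$. The bound making the last step legitimate is exactly Proposition~\ref{est1} (or, for products of particle increments, Proposition~\ref{GirsanovKparticles}): via Cauchy--Schwarz, the contribution of $b-b^\varepsilon$ to $\E_{\Q^N}[(G(\mu^N))^2]$ is controlled by a moment of $\exp\{\alpha\int_0^T |(b-b^\varepsilon)(u,\cdot,\cdot)|^2 du\}$, which is bounded uniformly in $N$ and tends to $0$ with $\varepsilon$ by dominated convergence (the dominating function $h$ does not change). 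Here one uses that, after a partial Girsanov transform removing the drift of the relevant $\leq 4$ particles, the coordinates in question are independent Brownian motions, so Proposition~\ref{est1} applies with $Y$ the vector of the other coordinates.

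For part i), the strategy is the standard one: $\nu^N$ is the empirical measure of the exchangeable array $(X^{1,N},\dots,X^{N,N})$ evaluated on $4$-tuples, and the limit points of laws of such "empirical measures of empirical measures" are supported on i.i.d.\ (product) measures. Concretely, one shows that for any bounded continuous $\psi_1,\dots,\psi_4$ on $C([0,T];\R^d)$,
\begin{equation*}
\E_{\Q^N}\Big[\Big|\langle\nu^N,\psi_1\otimes\cdots\otimes\psi_4\rangle-\textstyle\prod_{m=1}^4\langle\mu^N,\psi_m\rangle\Big|\Big]\xrightarrow[N\to\infty]{}0,
\end{equation*}
which is an elementary combinatorial estimate (the diagonal terms $i=j$ etc.\ cost a factor $1/N$), and combine it with the already-proven fact \eqref{PoC:1} that $G(\mu^N)\to 0$ in $L^2$ together with the tightness of $\{\mu^N\}$ to identify any limit point $\Pi^\infty$ as concentrated on measures of the form $\nu^1\otimes\nu^1\otimes\nu^1\otimes\nu^1$. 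Equivalently one invokes the de Finetti / Hewitt--Savage route as in Sznitman~\cite[Prop.~2.2]{Sznitman}: the product structure is forced because the "chaos functional" $\int|\langle\nu,\psi\otimes\psi\rangle-\langle\nu,\psi\otimes 1\rangle\langle\nu,1\otimes\psi\rangle|\,d\Pi^\infty(\nu)$ equals the $N\to\infty$ limit of its particle counterpart, which vanishes.

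For part ii), the $L^r$ bound on the marginal densities $\rho^1_t$ of $\nu^1$ must be inherited from a uniform-in-$N$ bound on the one-particle marginals of $\Q^N$. The plan is: by exchangeability $\nu^1_t$ is the weak limit of $\text{Law}_{\Q^N}(X^{1,N}_t)$ along the subsequence; by the partial Girsanov transform of Section~\ref{sec:GirsTrK} with $r=1$, for any bounded measurable $\varphi\geq 0$,
\begin{equation*}
\E_{\Q^N}[\varphi(X^{1,N}_t)]=\E_{\Q^{1,N}}[(Z^{(1)}_T)^{-1}\varphi(\widehat X^{1,N}_t)]\leq \big(\E_{\Q^{1,N}}[(Z^{(1)}_T)^{-2}]\big)^{1/2}\big(\E_{\Q^{1,N}}[\varphi(\widehat X^{1,N}_t)^2]\big)^{1/2},
\end{equation*}
and since $\widehat X^{1,N}_t$ is (the initial condition plus) a Brownian motion, $\E_{\Q^{1,N}}[\varphi(\widehat X^{1,N}_t)^2]=\int\int \varphi(y)^2 g_t(y-x)\,dy\,\mu_0(dx)\leq \|g_t\|_{L^{r'/2}}\,\|\varphi^2\|_{L^{(r')^{*}}}$-type bounds via Young/Hölder, while $\E_{\Q^{1,N}}[(Z^{(1)}_T)^{-2}]$ is bounded uniformly in $N$ by Proposition~\ref{GirsanovKparticles} (as already used in the proof of Lemma~\ref{Tightness:crit}). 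Taking the supremum over $\varphi$ in the unit ball of the appropriate $L^{r^*}$ space and using \eqref{eq:gaussianLp} gives $\|\rho^{1,N}_t\|_{L^r}\leq C_T t^{-\frac d2(1-\frac1r)}$ uniformly in $N$; lower semicontinuity of the $L^r$ norm under weak convergence then passes the bound to $\rho^1_t$. I expect the main obstacle to be the first part: making the passage to the limit in \eqref{PoC:BigLimit} rigorous despite the singular, discontinuous integrand $H$, i.e.\ setting up the truncation/mollification of $b$ and verifying that the error terms are controlled uniformly in $N$ by the exponential-moment bounds — this is where the whole "work on the martingale problem at the particle level" philosophy, and the partial Girsanov estimates, are really needed.
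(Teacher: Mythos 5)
Your parts i) and ii) are essentially the paper's argument (product structure of $\nu^N=(\mu^N)^{\otimes 4}$ as in~\cite{BossyTalay}, and the one--particle partial Girsanov transform plus H\"older and Riesz representation for the $L^r$ bound on $\rho^1_t$ --- note only that your ``Cauchy--Schwarz then H\"older'' version of ii) needs a general exponent $\alpha\in(1,r')$ on the Girsanov density to cover all $r>1$, as squaring $\varphi$ breaks down when $r'<2$). The genuine gap is in your treatment of \eqref{PoC:BigLimit}. Your truncation/mollification scheme controls the error $b-b^\varepsilon$ only on the particle side, uniformly in $N$ (and even there the quantity that tends to $0$ with $\varepsilon$ is not an exponential moment, which tends to $1$, but the $L^2$-in-$\omega$ norm of $(b-b^\varepsilon)(u,W^i_u,W^j_u)$ obtained after Cauchy--Schwarz against $Z^{(r)}_T$, estimated as in Lemma~\ref{MainLemmas1}). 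After passing $N\to\infty$ for the bounded continuous functional $H^\varepsilon$ you are left with $\int\int H^\varepsilon\,d\nu\,d\Pi^\infty(\nu)$, and to send $\varepsilon\to 0$ there you must show that the limiting measures do not charge the singular set and that $b^\varepsilon\to b$ in $L^1$ of the relevant joint time marginals of $\nu$, $\Pi^\infty$-a.e. Nothing in your plan provides this at that stage: a priori a limit point could put mass on $\mathcal{N}_b(u)$ (e.g.\ on the diagonal), and then the two iterated limits need not match. This is precisely the point the paper addresses head-on: for fixed $u_1,u_2$ it proves, via the four--particle partial Girsanov transform, Proposition~\ref{GirsanovKparticles} and Riesz representation, that the limiting fixed-time marginal $\Q_{u_1,u_2,t_1,\dots,t_a}$ has an $L^2$ density, so that $F$ (which by hypothesis is continuous off the Lebesgue-null set $\mathcal{N}_b$) is a.e.\ continuous for the limit law and the convergence $A_N\to\langle\Q_{u_1,u_2,t_1,\dots,t_a},F\rangle$ holds without any mollification; a separate domination step (Gaussian computations case by case in the coinciding indices) then justifies integrating in $(u_1,u_2)$.

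Your plan could be repaired with tools you already list, but the order must change: first prove (an extension of) your part ii) at the level of \emph{pairs} of particles and pairs of times --- i.e.\ uniform-in-$N$ $L^r$ bounds, hence absolute continuity, for the joint marginals appearing in $H$, exactly as the paper does with its measures $\Q_{u_1,u_2,t_1,\dots,t_a}$ --- and only then close the $\varepsilon\to 0$ limit on the $\Pi^\infty$ side by dominated convergence against those densities. As written (BigLimit first, ii) last, and only one-particle marginals), the key step of the lemma is not justified.
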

Then, combining \eqref{PoC:1} with the above result, we get
\begin{align*}
&\int_{C([0,T];\R^d)}  \phi(x^1_{t_1}, \dots, x^1_{t_a})\Big [ f(x_t^1)-f(x_s^1)- \frac{1}{2}\int_s^t \triangle f(x_u)du   -  \int_s^t \nabla f(x^1_{u}) \cdot b(u,x^1_{u}-y)\rho_u^1(y)dy  du\Big]d\nu^1(x^1)=0.
\end{align*}
We deduce that $\nu^1$ solves (MP) and thus that $\nu^1=\Q$. As
by definition $\Pi^\infty$ is a limit point of $\text{Law}(\nu^N)$, it
follows that any limit point of $\text{Law}(\mu^N)$ is $\delta_\Q$,
which ends the proof.
\subsubsection{Proof of Lemma \ref{PoC:biglemma}}
\paragraph*{Proof of~\eqref{PoC:BigLimit}: Step 1.}
Notice that
\begin{align}
\label{PoC:2}
& \E [\left(G(\mu^N)\right)^{2}] = \frac{1}{N^2}\E \sum_{i,k=1}^N \Phi_2(X^{i,N},X^{k,N})
+  \frac{1}{N^3}\E\sum_{i,k,l=1}^N \Phi_3(X^{i,N},X^{k,N},X^{l,N}) \nonumber \\
& + \frac{1}{N^3}\E\sum_{i,j,k=1}^N \Phi_3(X^{k,N},X^{i,N},X^{j,N}) +
\frac{1}{N^4} \E\sum_{i,j,k,l=1}^N \Phi_4(X^{i,N},X^{j,N},X^{k,N},
X^{l,N}),
\end{align}
where
% \1_{\{X^{k,N}_u \neq X^{l,N}_u\}}
\begin{align*}
& \Phi_2(X^{i,N},X^{k,N}) := \phi(X^{i,N}_{t_1}, \dots,
X^{i,N}_{t_a})~\phi(X^{k,N}_{t_1}, \dots, X^{k,N}_{t_a})\\
& \times  \Big(f(X_t^{i,N})-f(X_s^{i,N})-
\frac{1}{2}\int_s^t \triangle f(X^{i,N}_{u})du\Big)\Big(f(X_t^{k,N})-f(X_s^{k,N})-
 \frac{1}{2}\int_s^t \triangle f(X^{k,N}_{u})du\Big), \\
&  \Phi_3(X^{i,N},X^{k,N},X^{l,N}) := -\phi(X^{i,N}_{t_1}, \dots,
X^{i,N}_{t_a})~\phi(X^{k,N}_{t_1}, \dots, X^{k,N}_{t_a})\\
& \times \Big(f(X_t^{i,N})-f(X_s^{i,N})-
\frac{1}{2}\int_s^t\triangle f(X^{i,N}_{u_1})du_1\Big)\int_s^t
\nabla f (X^{k,N}_u)\cdot b(u,X^{k,N}_u,X^{l,N}_u)  \mathbb{1}_{{\{(X_u^{k,N},X_u^{l,N})\notin \mathcal{N}_b(u)\}}}\ du, \\
& \Phi_4(X^{i,N},X^{j,N},X^{k,N}, X^{l,N}) :=
\phi(X^{i,N}_{t_1}, \dots,X^{i,N}_{t_a})
\phi(X^{k,N}_{t_1},\dots,X^{k,N}_{t_a})
\\ & \times \int_s^t \int_s^t  \nabla f(X^{i,N}_{u_1})\cdot b(u_1,X^{i,N}_{u_1},X^{j,N}_{u_1}) \mathbb{1}_{{\{(X_{u_1}^{i,N},X_{u_1}^{j,N})\notin \mathcal{N}_b(u_1)\}}} 
\\
& ~~~~~~~~~~~~~~~~~~~~~\times \nabla f (X^{k,N}_{u_2}) \cdot b(u_2,X^{k,N}_{u_2},X^{l,N}_{u_2}) \mathbb{1}_{{\{(X_{u_2}^{i,N},X_{u_2}^{j,N})\notin \mathcal{N}_b(u_2)\}}} \ d{u_1} \ d{u_2}.
\end{align*}
Let $C_N$ be the last term in the r.h.s. of~\eqref{PoC:2}.
In Steps 2-4 below we prove that $C_N$ converges as $N \to \infty$
and we
identify its limit.~
Define the function~$F$ on $\R^{(2p+4)d}$ as
\begin{multline} \label{def:F}
F(x^1, \dots, x^{2p+4}) := \phi(x^{5},\dots,x^{p+4})~\phi(x^{p+5}, 
\dots, x^{2p+4})~\nabla f(x^1)\cdot b(u_1,x^1,x^2) 
%~ \\
%\quad \quad \quad \quad \quad \quad \quad \quad 
%\times
\nabla f(x^3)\cdot b(u_2,x^3,x^4)\\
\times \mathbb{1}_{{\{(x^{1},x^{2})\notin \mathcal{N}_b(u_1)\}}}  \mathbb{1}_{{\{(x^{3},x^{4})\notin \mathcal{N}_b(u_2)\}}}  .
\end{multline}
%\jeff{We then have}~$C_N\jeff{\leq}\int_s^t \int_s^t\int_0^{u_1}\int_0^{u_2} A_N~d\theta_1\ d\theta_2\ du_1 \ du_2$ with
 We set $C_N=\int_s^t \int_s^t A_N \ du_1 \ du_2$ with
\begin{equation*}
%\label{eq:def-AN}
A_N:=\frac{1}{N^4}\sum_{i,j,k,l=1}^N \E(F(X^{i,N}_{u_1},
X^{j,N}_{u_1},
X^{k,N}_{u_2},X^{l,N}_{u_2},X^{i,N}_{t_1},\dots,
 X^{i,N}_{t_a},X^{k,N}_{t_1}, \dots, X^{k,N}_{t_a})).
\end{equation*}
We now~aim to show that $A_N$ converges
pointwise (Step~2),~that~$|A_N|$ is bounded from above by an
integrable function w.r.t.~$d{\theta_1}\ d{\theta_2}\ d{u_1} \ d{u_2}$
(Step~3), and finally to identify the limit of $C_N$ (Step~4).
\paragraph*{Proof of~\eqref{PoC:BigLimit}: Step 2.}
Fix $u_1,u_2 \in [s,t]$ .  Define $\tau^N$ as
$$\tau^N:= \frac{1}{N^4}\sum_{i,j,k,l=1}^N \delta_{X^{i,N}_{u_1},
X^{j,N}_{u_1},
X^{k,N}_{u_2},X^{l,N}_{u_2},X^{i,N}_{t_1},\dots,
 X^{i,N}_{t_a},X^{k,N}_{t_1},\dots, X^{k,N}_{t_a}}.$$
Define the measure $\Q^N_{u_1,\theta_1,u_2,\theta_2,t_1, \dots,t_a}$
on  $(\R^{2a+4})^d$ as
$ \Q^N_{u_1,u_2,t_1, \dots,t_a}(A)= \E(\tau^N(A))$.
The convergence of $\{\text{law}(\nu^N)\}$ implies the weak convergence
of $\Q^N_{u_1,\theta_1,u_2,\theta_2,t_1, \dots,t_a}$ to the measure on
$(\R^{2a+4})^d$ defined by
%$\Q_{u_1,\theta_1,u_2,\theta_2,t_1, \dots,t_a}$,
\begin{align*}
&\Q_{u_1,u_2,t_1,
\dots,t_a}(A):=\int_{\mathcal{P}(C([0,T];\R^d)^4)} \int_{C([0,T];\R^d)^4}
\mathbb{1}_{A}(x^1_{u_1}, x^2_{u_1},
x^3_{u_2},x^4_{u_2}, x^1_{t_1},\dots,\\
&~~~~~~~~~~~~~~~~~~~~~~~~~~~~~~~~~~~~~~~~~~~~~~~~~~~~~~~~~~~~~~~~~~~
x^1_{t_a},x^3_{t_1},\dots, x^3_{t_a})d\nu(x^1,x^2,x^3,x^4)d\Pi^{\infty}(\nu).
\end{align*}
Let us show that this probability measure has an $L^2$-density w.r.t. the Lebesgue measure~on~$(\R^{2a+4})^{\times d}$ ( $L^2$ could be replaced with any  $L^r$).
Let $h\in C_c(\R^{(2a+4)d})$. By weak convergence,
\begin{align*}
& \left| <\Q_{u_1,u_2,t_1, \dots,t_a},h> \right|\\
 &= \left| \lim_{N \to \infty} \frac{1}{N^4} \sum_{i,j,k,l=1}^N \E
 h(X^{i,N}_{u_1}, X^{j,N}_{u_1},
 X^{k,N}_{u_2},X^{l,N}_{u_2},
 X^{i,N}_{t_1},\dots, X^{i,N}_{t_a},X^{k,N}_{t_1},\dots,
 X^{k,N}_{t_a})\right|.
\end{align*}
When, in the preceding sum, at least two indices are equal, we bound the expectation by $\|h\|_\infty.$
When $i\neq j \neq k\neq l$, we apply Girsanov's transform in
Section \ref{sec:GirsTrK} with four particles
%$k=4$
and Proposition~
\ref{GirsanovKparticles}. This procedure leads to
\begin{multline*}
\left| <\Q_{u_1,\theta_1,u_2,\theta_2,t_1, \dots,t_a},h> \right|\leq
\lim_{N \to \infty} \Big(\|h\|_\infty \frac{C}{N} \\
+  \frac{C_T}{N^4} \sum_{i\neq j \neq k \neq l} \left(\E
h^2(\hat{X}^{i,N}_{u_1}, \hat{X}^{j,N}_{u_1},
\hat{X}^{k,N}_{u_2},\hat{X}^{l,N}_{u_2}
,\hat{X}^{i,N}_{t_1},\dots, \hat{X}^{i,N}_{t_a},\hat{X}^{k,N}_{t_1},\dots,
\hat{X}^{k,N}_{t_a})\right)^{1/2}\Big).
\end{multline*}
All the processes $\hat{X}^{i,N},\ldots,\hat{X}^{l,N}$ being
independent Brownian motions we deduce that
$$ \left| <\Q_{u_1,u_2,t_1, \dots,t_a},h> \right|\leq
C_{u_1,u_2,\theta_1,\theta_2, t_1, \dots, t_a}
\|h\|_{L^2(\R^{2p+6})}.
$$
It follows from Riesz's representation~ theorem that $\Q_{u_1,u_2,t_1, \dots,t_a}$ has a density w.r.t. Lebesgue's measure in $L^2(\R^{(2a+4)d})$. Therefore,
the functional~$F$ is continuous $\Q_{u_1,u_2,t_1,
\dots,t_a}$ - a.e. Since for any fixed $u_1,u_2 \in
[s,t]$  $F$ is also bounded  $\Q_{u_1,u_2,t_1,
\dots,t_a}$ - a.e. we have
\begin{align*}
\lim_{N\to \infty} A_N = <\Q_{u_1,u_2,t_1,
\dots,t_a}, F>.
\end{align*}
\paragraph*{Proof of~\eqref{PoC:BigLimit}: Step 3.}
In view of the definition~(\ref{def:F}) of $F$ we may restrict 
ourselves to the case $i\neq j$ and
$k\neq l$. Use the Girsanov transforms from Section
\ref{sec:GirsTrK} with $r_{i,j,k,l}\in \{2,3,4\}$ according to 
the 
respective 
cases $(i=k,j=l)$, $(i=k,j\neq l)$, $(i\neq k,j\neq l)$, etc. 
Below we write $r$ instead of $r_{i,j,k,l}$.
By exchangeability it comes:
$$A_N=\Big| \frac{1}{N^4}\sum_{i\neq j,k\neq l}
\E_{\Q^{r,N}}(Z^{(r)}_TF(\cdots))\Big|\leq
\frac{1}{N^4} \sum_{i\neq j,k\neq l}
\left(\E_{\Q^{r,N}}(Z^{(r)}_T)^2\right)^{1/2}
\Big(\E_{\Q^{r,N}}(F^2(\cdots))\Big)^{1/2}.$$
By Proposition \ref{GirsanovKparticles}, $\E_{\Q^{r,N}}(Z^{(r)}_T)^2$
can be bounded uniformly w.r.t. $N$. As the functions $f$ and $\phi$
are bounded we deduce
$$\sqrt{\E_{\Q^{r,N}}(F^2(\cdots))} \leq C 
\left(\E_{\Q^{r,N}}(h^2_{u_1}(W^i_{u_1}-W^j_{u_1})
h^2_{u_2}(W^k_{u_2}-W^l_{u_2}))
 \right)^{1/2}, $$
for $i\neq j$, $k\neq l$ and $r\equiv r_{i,j,k,l}$. We consider the three cases:
\textbf{Case 1}  $i\neq k$, $j\neq l$ : As all $4$ Brownian motions are independent, one can separate this into a product of expectations and using the same computations as in Lemma \ref{MainLemmas1}, one has 
$$\left(\E_{\Q^{r,N}}(h^2_{u_1}(W^i_{u_1}-W^j_{u_1})
h^2_{u_2}(W^k_{u_2}-W^l_{u_2}))
 \right)^{1/2} \leq \sqrt{\|h_{u_1}\|^2_{L^p(\R^d)} \|g_{u_1}\|_{L^{\frac{p}{p-2}}(\R^d)}\|h_{u_2}\|^2_{L^p(\R^d)} \|g_{u_2}\|_{L^{\frac{p}{p-2}}(\R^d)}}$$
 
\textbf{Case 2} $i=k$, $j=l$ : As we ony have two independent Brownian motions, we condition by the smaller time index and by one of the two independent Brownian motions. It comes
\begin{multline*}
\left(\E_{\Q^{r,N}}(h^2_{u_1}(W^i_{u_1}-W^j_{u_1})
h^2_{u_2}(W^i_{u_2}-W^j_{u_2}))
 \right)^{1/2} \leq \mathbb{1}_{\{u_1< u_2\}}\Big(\|h_{u_1}\|^2_{L^p(\R^d)} \|g_{u_1}\|_{L^{\frac{p}{p-2}}(\R^d)}\\
 \times \|h_{u_2}\|^2_{L^p(\R^d)} \|g_{u_2-u_1}\|_{L^{\frac{p}{p-2}}(\R^d)}\Big)^{\frac{1}{2}}+ \mathbb{1}_{\{u_2< u_1\}}\Big(\|h_{u_1}\|^2_{L^p(\R^d)} \|g_{u_1-u_2}\|_{L^{\frac{p}{p-2}}(\R^d)}
 \times \|h_{u_2}\|^2_{L^p(\R^d)} \|g_{u_2}\|_{L^{\frac{p}{p-2}}(\R^d)}\Big)^{\frac{1}{2}}
\end{multline*}
\textbf{Case 3} $i=k$, $j\neq l$: Same bound as above is obtained by  conditioning by the smaller time index and $W^j$ and $W^l$.
%\end{itemize}
In any of the above cases, in view of the assumption {\bf (H$^b$)}, the bounds are integrable in $L^1((0,T)^2)$.
We thus have obtained:
$A_N \leq C H(u_1, u_2),$
where $H$ belongs to $L^1((0,T)^2)$.

\paragraph*{Proof of~\eqref{PoC:BigLimit}: Step 4.} Steps 2 and 3
allow us to conclude that
\begin{align*}
 \lim_{N\to \infty} C_N = \int_s^t\int_s^t
<\Q_{u_1,u_2,t_1, \dots,t_a},F>
~du_1du_2.
\end{align*}
By definition of $\Q_{u_1,u_2,t_1, \dots,t_a}$ and
$F$ we thus have obtained that
\begin{align*}
\lim_{N\rightarrow \infty} C_N  =&\int_{P(C([0,T];\R)^4)}\int_s^t\int_s^t
\int_{C([0,T];\R^d)^4}   \phi(x^1_{t_1},
\dots, x^1_{t_a})\phi(x^3_{t_1}, \dots, x^3_{t_a})  \\
&\times  \nabla f(x^1_{u_1}) \cdot b(u_1,x^1_{u_1},x^2_{u_1})
 \nabla f(x^3_{u_2})\cdot b(u_2,x^3_{u_2},x^4_{u_2})\mathbb{1}_{{\{(x^1_{u_1},x_{u_1}^{2})\notin \mathcal{N}_b(u_1)\}}}  \mathbb{1}_{{\{(x_{u_2}^{3},x_{u_2}^{4})\notin \mathcal{N}_b(u_2)\}}}  \\
&~~~~~~~~d\nu(x^1,x^2,x^3,x^4)
~d\theta_1~d\theta_2~du_1~du_2~d\Pi^\infty(\nu).
\end{align*}
A similar procedure is applied to the three other terms in
the r.h.s. of~\eqref{PoC:2}. Together with the preceding,
we obtain~\eqref{PoC:BigLimit}
\paragraph*{Proof of~ i) and ii).}
Now, we prove the claims i) and ii) of Lemma~\ref{PoC:biglemma}.
\begin{enumerate}[i)]
\item For any measure $\nu \in \mathcal{P}(C([0,T];\R)^4)$, denote its first
marginal
by $\nu^1$. One easily gets
$ \Pi^\infty~~\text{a.e.},~~ \nu = \nu^1 \otimes \nu^1 \otimes \nu^1
\otimes \nu^1$
(see~\cite[Lemma 3.3]{BossyTalay}).
\item Take $\varphi \in C_c(\R^d)$ and fix $r>1$. Let $\alpha \in (1,r')$ where $r'$ is the conjugate of $r$. Using similar arguments as in the above
Step~1, for any $0<t\leq T$ one has $\Pi^\infty(d \nu)$ a.e.,
\begin{align*}
&<\nu^1_t, \varphi>= \lim_{N \to \infty}\E_{\Q^N} <\mu^N_t, h>= \lim_{N \to \infty}\E_{\Q^N}(\varphi(X_t^{1,N}))=\lim_{N \to \infty}\E_{\Q^{1,N}}(Z_T^{(1)}\varphi(W_t^{1,N}))\\
& \leq C \left(\E_{\Q^{1,N}}(Z_T^{(1)})^{\alpha'} \right)^\frac{1}{\alpha'} \left(\E_{\Q^{1,N}}(\varphi(X_t^{1,N}))^{\alpha} \right)^\frac{1}{\alpha}\leq C \|\varphi\|_{L^{r'}(\R^d)} \|g_t\|^{\frac{1}{\alpha}}_{L^{\left(r'/\alpha\right)'}}\leq C  \|\varphi\|_{L^{r'}(\R^d)}  \frac{1}{t^{\frac{d}{2}\frac{1}{r'}}}
\end{align*}
Thus, one has
$$<\nu^1_t, \varphi>\leq C  \|\varphi\|_{L^{r'}(\R^d)}  \frac{1}{t^{\frac{d}{2}(1-\frac{1}{r})}}. $$
Apply the Riesz representation theorem to conclude the proof.
\end{enumerate}

\bibliography{biblio}

\begin{thebibliography}{1}

\bibitem{BossyTalay}
{\sc Bossy, M., and Talay, D.}
\newblock Convergence rate for the approximation of the limit law of weakly
  interacting particles: application to the {B}urgers equation.
\newblock {\em Ann. Appl. Probab. 6}, 3 (1996), 818--861.

\bibitem{Mauri}
{\sc Hoeksema, J., Maurelli, M., Holding, T., and Tse, O.}
\newblock Large deviations for singularly interacting diffusions.
\newblock {\em Preprint arXiv:2002.01295\/} (2020).

\bibitem{JTT}
{\sc Jabir, J.-F., Talay, D., and Tomašević, M.}
\newblock Mean-field limit of a particle approximation of the one-dimensional
  parabolic-parabolic keller-segel model without smoothing.
\newblock {\em Electron. Commun. Probab. 23\/} (2018), 14 pp.

\bibitem{KaratzasShreve}
{\sc Karatzas, I., and Shreve, S.~E.}
\newblock {\em Brownian motion and stochastic calculus}, second~ed., vol.~113
  of {\em Graduate Texts in Mathematics}.
\newblock Springer-Verlag, New York, 1991.

\bibitem{KryRoc-05}
{\sc Krylov, N.~V., and R\"ockner, M.}
\newblock Strong solutions of stochastic equations with singular time dependent
  drift.
\newblock {\em Probab. Theory Related Fields 131}, 2 (2005), 154--196.

\bibitem{RocknerZhang}
{\sc R\"ockner, M., and Zhang, X.}
\newblock {Well-posedness of distribution dependent SDEs with singular drifts}.
\newblock {\em Bernoulli 27}, 2 (2021), 1131 -- 1158.

\bibitem{Sznitman}
{\sc Sznitman, A.-S.}
\newblock Topics in propagation of chaos.
\newblock In {\em \'{E}cole d'\'{E}t\'e de {P}robabilit\'es de {S}aint-{F}lour
  {XIX}---1989}, vol.~1464 of {\em Lecture Notes in Math.} Springer, Berlin,
  1991, pp.~165--251.

\end{thebibliography}

\end{document}